\documentclass[11pt]{amsart}
\usepackage[english]{babel}
\usepackage{latexsym}
\usepackage{indentfirst}
\usepackage{amsxtra}
\usepackage{amssymb}
\usepackage{url}
\usepackage{amsmath}
\usepackage{mathrsfs}
\usepackage{hyperref}
\usepackage[T1]{fontenc}
\usepackage[utf8]{inputenc}
\usepackage[all]{xy}
\usepackage{tikz}
\usepackage{graphicx}
\usepackage{tikz-cd}
\usetikzlibrary{arrows.meta}
\usepackage{comment}
\usepackage{cancel}
\usepackage{mathtools}
\usepackage{faktor}
\usepackage{xfrac}
\graphicspath{ {./images/} }
\usepackage{caption}
\usepackage{xcolor}

\newcommand{\R}{\mathbb{R}}

\newcommand{\N}{\mathbb{N}}

\newcommand{\rmn}{\romannumeral}

\newtheorem{lemma}{Lemma}[section]

\newtheorem{thm}[lemma]{Theorem}
\newtheorem*{thm*}{Theorem}
\newtheorem{prop}[lemma]{Proposition}

\newtheorem{defi}[lemma]{Definition}
\newtheorem{rem}[lemma]{Remark}
\newtheorem{ex}[lemma]{Example}

\DeclareMathOperator{\g}{\mathfrak{g}}
\DeclareMathOperator{\n}{\mathfrak{n}}
\DeclareMathOperator{\Ric}{\mathrm{Ric}}
\DeclareMathOperator{\ric}{\mathrm{ric}}
\DeclareMathOperator{\Der}{\mathrm{Der}}
\DeclareMathOperator{\Path}{\mathrm{Path}}
\DeclareMathOperator{\Aut}{\mathrm{Aut}}

\DeclareMathOperator{\id}{id}

\makeatletter
\renewcommand{\section}{\@startsection{section}{1}{\z@}%
  {-3.5ex \@plus -1ex \@minus -.2ex}%
  {2.3ex \@plus.2ex}%
  {\normalfont\centering\bfseries}}
\makeatother

\makeatletter
\renewcommand{\subsection}{\@startsection{subsection}{2}{\z@}%
  {-3.25ex\@plus -1ex \@minus -.2ex}%
  {1.5ex \@plus .2ex}%
  {\normalfont\bfseries}}
\makeatother

\begin{document}
\title
{Nilpotent Lie algebras obtained by ordered sets and Ricci solitons}

\author{Yihao Zheng}
\address{School of Mathematical Sciences, Fudan University, Shanghai 200433, China}
\email{yhzheng24@m.fudan.edu.cn}

\author{Shenglin Zhu}
\address{School of Mathematical Sciences, Fudan University, Shanghai 200433, China}
\email{mazhusl@fudan.edu.cn}

\keywords{Nilpotent Lie algebras,  Quivers with relations, Ordered sets, Ricci solitons}

\begin{abstract}
Nilpotent Lie groups with left-invariant metrics provide nontrivial examples of Ricci solitons. Some typical examples are given by the class of two-step nilpotent Lie algebras obtained from simple directed graphs and the class of nilpotent Lie algebras obtained from finite acyclic quivers. In this paper, we generalize the construction of nilpotent Lie algebras that are algebraic Ricci solitons obtained from finite acyclic quivers. We use some special ordered sets to construct nilpotent Lie algebras, which can also be obtained from some special quivers with relations.

A transitively and antisymmetrically ordered set (or TAOS, for short) is a set together with a binary relation that is transitive and antisymmetric. Utilizing the concept of incidence algebras of TAOSs, we construct nilpotent Lie algebras. We modify the method introduced by Mizoguchi and Tamaru \cite{MR4941781} and use it to show that the nilpotent Lie algebras with arbitrarily high degrees of nilpotency obtained from some special finite transitively and antisymmetrically ordered sets, called array TAOSs, are algebraic Ricci solitons. We also give some generalizations of this result, which yield more nilpotent Lie algebras that are algebraic Ricci solitons. Moreover, the corresponding simply-connected nilpotent Lie groups admit left-invariant Ricci solitons.
\end{abstract}

\maketitle

\section{Introduction}
The Riemannian metric $g$ of a Riemannian manifold $(M,g)$ is called a Ricci soliton if its Ricci curvature tensor, denoted by $\ric_g$, satisfies
$$ \ric_g = c \cdot g + \mathcal{L}_X g, $$
for some constant $c \in \mathbb{R}$ and a vector field $X$ on $M$, where $\mathcal{L}_X$ is the Lie derivative with respect to $X$. When $\mathcal{L}_X g = 0$, the metric $g$ is said to be Einstein. The study of Ricci solitons, possibly together with additional structures, remains a central topic in modern geometry. Within this context, homogeneous Ricci solitons have recently attracted significant attention (see, for instance, \cite{MR3305924}, \cite{MR4771217}, \cite{MR2537049}).

The properties of a homogeneous Ricci soliton depend on the sign of $c$. When $c  >   0$, the problem reduces to the study of homogeneous Einstein spaces, since any such soliton decomposes as the direct product of a flat manifold and a homogeneous Einstein manifold with positive scalar curvature (\cite{MR2507581}). Moreover, homogeneous Ricci solitons with $c = 0$ are necessarily flat (\cite{MR362145}). Therefore, nontrivial homogeneous Ricci solitons can only occur when $c   <     0$.

For the case $c   <     0$, B\"{o}hm and Lafuente (\cite{MR4583772}) recently resolved the generalized Alekseevskii conjecture, showing that homogeneous Ricci solitons with $c   <     0$ are isometric to simply-connected solvable Lie groups endowed with left-invariant metrics. A solvable Lie group with a left-invariant metric is called a solvmanifold; similarly, a nilpotent Lie group with a left-invariant metric is called a nilmanifold.

A fundamental structural theorem due to Lauret (\cite{MR2770554}) establishes a correspondence between Ricci soliton solvmanifolds and Ricci soliton nilmanifolds. More precisely, any Ricci soliton nilmanifold can be obtained as the nilradical of a Ricci soliton solvmanifold. Conversely, via suitable solvable extensions, any Ricci soliton solvmanifold can be obtained from a Ricci soliton nilmanifold. This relationship highlights the importance of understanding nilmanifolds for the broader study of homogeneous Ricci solitons.

Two typical classes of Ricci soliton nilmanifolds have been constructed. The first is the class of nilpotent Lie groups obtained from simple directed graphs by Dani and Mainkar (\cite{MR2140439}), which are necessarily two-step nilpotent. The second is the class of nilpotent Lie groups obtained from finite acyclic quivers by Mizoguchi and Tamaru (\cite{MR4941781}), which can have arbitrarily high nilpotency step. Whether the simply-connected nilpotent Lie groups obtained from simple directed graphs admit left-invariant Ricci solitons was determined by Lauret and Will (\cite{MR2785768}), and Mizoguchi and Tamaru (\cite{MR4941781}) showed that all simply-connected nilpotent Lie groups obtained from finite acyclic quivers admit left-invariant Ricci solitons. Since finite acyclic quivers can be used to construct nilpotent Lie groups, it is natural to ask whether this extends to finite acyclic quivers with relations. In this paper, we construct nilpotent Lie algebras from transitively and antisymmetrically ordered sets, whose incidence algebras can be viewed as quiver algebras of special quivers with relations. A transitively and antisymmetrically ordered set (or TAOS, for short) is a set equipped with a binary relation that is transitive and antisymmetric, and it is said to be finite if its cardinality is finite. The notion of the incidence algebra of a TAOS is analogous to the incidence algebra of a transitively ordered set (cf. \cite{MR732192}) or the incidence algebra of a partially ordered set (cf. \cite{MR1442260}). The incidence algebra of a TAOS $P$ is an algebra spanned by the set $\{(s,t)\mid s,t \in P, \ s  \prec    t \} \setminus \{(u,u)\mid u \in P, \ u  \prec    u \}$
whose multiplication is defined as follows: for two basis elements $(s,t)$ and $(u,v)$, the product $(s,t) \cdot (u,v)$ is
\[
(s,t) \cdot (u,v)=
\begin{cases}
(s,v), & \text{if } t = u, \\
0, & \text{if } t \neq u.
\end{cases}
\]
The incidence algebra equipped with the Lie bracket $[x,y] = x \cdot y - y \cdot x$ is the Lie algebra constructed in this paper. The incidence algebra of a finite TAOS is always finite-dimensional and nilpotent. Moreover, it can be nilpotent of arbitrarily high step.

We introduce the concept of array TAOSs, whose definition is as follows.

\begin{defi}
Given $k \in \N$ where $k \geq 2$, $n_1, \dots, n_k \in \N$ where $n_i \geq 1$, $i=1, \dots, k$, $m_1, \dots, m_k \in \N$ where $1 \leq m_i \leq n_i$, $i=1, \dots, k$, and $m_1 = n_1$, $m_k = n_k$, we define the array TAOS $P$ with respect to these data as follows: let $P$ be a set with $\sum_{i=1}^k n_i$ elements, partitioned into $k$ columns where the $i$-th column has cardinality $n_i$, $i=1, \dots, k$. For the $i$-th and $(i+1)$-th columns, choose $m_{i+1}$ elements from the $(i+1)$-th column, and assume that each element of the $i$-th column, denoted by $\eta_i$, and each of the $m_{i+1}$ elements, denoted by $\eta_{i+1}$, satisfy $\eta_i   \prec     \eta_{i+1}$, $i=1, \dots, k-1$. By transitivity, we obtain the required relation on $P$.
\end{defi}

Using this definition, we prove the following theorem.

\begin{thm}\label{Thm:array TAOS Lie group}
The simply-connected Lie groups obtained from a finite array TAOS admit left-invariant Ricci solitons.
\end{thm}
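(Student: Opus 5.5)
By Lauret's theory, a simply-connected nilpotent Lie group admits a left-invariant Ricci soliton if and only if its Lie algebra $\n$ admits an inner product $\langle\cdot,\cdot\rangle$ that is an algebraic Ricci soliton, i.e. $\Ric = c\,\id + D$ for some $c\in\R$ and $D\in\Der(\n)$. So it suffices to produce such an inner product on the incidence Lie algebra $\n$ of a finite array TAOS $P$. I would follow the approach of Mizoguchi and Tamaru \cite{MR4941781} for acyclic quivers.

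First, the basis $\{(s,t)\}$ is nice. Brackets of basis vectors are $\pm$ a basis vector or $0$. Moreover, for each pair of basis vectors there is at most one other pair with bracket proportional to the same target. Hence every inner product making the basis orthogonal, with arbitrary weights $\lambda_{(s,t)}>0$, has diagonal Ricci operator. The standard nice-basis formula is
\[
\Ric(e_\alpha)=\Bigl(-\tfrac12\sum_{\beta,\gamma}\tfrac{\lambda_\beta\lambda_\gamma}{\lambda_\alpha}(c^\alpha_{\beta\gamma})^2+\tfrac12\sum_{\beta,\gamma}\tfrac{\lambda_\alpha\lambda_\beta}{\lambda_\gamma}(c^\gamma_{\alpha\beta})^2\Bigr)e_\alpha .
\]
Diagonal derivations are also easy to describe: $D(s,t)=(f(t)-f(s))(s,t)$ for any function $f\colon P\to\R$ defines a derivation. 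This matches the grading by the ``column index'', where an element $\eta$ in column $i$ gets $f(\eta)=i$.

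Second, I would use the array structure to reduce the problem to finitely many parameters. The elements of $P$ fall into columns, and within column $i$ into two types: the $m_i$ ``connected'' elements, which lie above all of column $i-1$, and the remaining $n_i-m_i$ ones. By the symmetry of the array under permuting equivalent elements inside a column, I would look for weights $\lambda_{(s,t)}$ depending only on the columns $i<j$ of $s,t$ and on the types of $s$ and $t$. The Ricci eigenvalue of $(s,t)$ is then a finite sum over factorisations $(s,u)(u,t)$ with $u$ in intermediate columns, which gives the negative terms, and over extensions $(u,s)$ and $(t,v)$, which give the positive terms. All of these counts are expressed through the $n_i$ and $m_i$.

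Third, I would impose the soliton condition in the form $\Ric(s,t)=c+f(t)-f(s)$, with $f$ depending on column and type. This gives a system of polynomial equations in the unknown weights. As in \cite{MR4941781}, it is convenient to rewrite it as follows. By Nikolayevsky/Payne-type criteria for nice bases, $\n$ is a Ricci soliton iff the equation $U v=[1]$ has a solution $v$ with positive entries. Here $U=YY^{T}$ and $Y$ is the root matrix of the nice basis, with rows indexed by nonzero triples $(\alpha,\beta,\gamma)$ and entries $\epsilon_\alpha+\epsilon_\beta-\epsilon_\gamma$. Triples correspond to chains $s\prec u\prec t$, so each row of $U$ indexes a chain and $U$ counts shared basis elements between chains. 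I would then find a positive solution $v$ that is constant on symmetry classes of chains (columns $i<j<l$ together with types). This is the ``modification'' of Mizoguchi–Tamaru: in their quiver case the solution is explicit in path lengths, and here the relations coming from transitivity add extra chains.

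The main obstacle is the last step: showing that the reduced linear system has a strictly positive solution for every choice of $k,n_i,m_i$. I would first try the ansatz $v_{(s,u,t)}$ depending only on the column differences. I would verify $Uv=[1]$ by induction on $k$, removing the last column; the condition $m_k=n_k$ keeps the top column fully connected, which should make the recursion close up. If positivity fails in general, the fallback is to solve for $v$ as a convex combination of the solutions for the "full" arrays ($m_i=n_i$, essentially complete $k$-partite posets) and to use the fact that adding isolated lower-type elements only adds rows whose equations can be satisfied with small positive values.
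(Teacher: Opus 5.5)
\textbf{There is a genuine gap: the proposal stops at the step that carries the whole theorem.}

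Your preliminary reductions are sound. These are Lauret's correspondence, the niceness of $B(P)$, and the symmetry reduction: $\Aut(P)$ permutes the rows of $Y$ and preserves $U$, so averaging a positive solution of $Uv=[1]$ over $\Aut(P)$ gives an invariant one. But with the criterion you invoke, the statement reduces exactly to the existence of a positive solution of $Uv=[1]$, and you neither exhibit one nor prove that one exists.
\begin{itemize}
\item The ansatz depending only on column differences is never checked. It is also implausible, since the number of chains through a given basis vector depends on the individual $n_i,m_i$ and on types, not only on differences.
\item The induction ``removing the last column'' is not carried out.
\item The fallback is not meaningful as stated. The systems for the full array and for a general array live on different index sets, so a convex combination of their solutions is undefined. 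Moreover, the added rows are coupled to the old ones through off-diagonal entries of $U=YY^{T}$, so they perturb the old equations rather than being solvable separately with small values.
\end{itemize}

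Separately, your nice-basis Ricci formula has the signs reversed. A basis vector receives a positive contribution when it is the output of a bracket (factorizations $(s,u)(u,t)$). It receives a negative contribution when it is an input (extensions by $(u,s)$ or $(t,v)$). For comparison, in the Heisenberg algebra the center has positive Ricci eigenvalue. So your third step, as written, would set up the wrong system.

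\textbf{The missing idea} is the paper's induction on the maximal chain length, peeling off the bottom rather than the top.
\begin{itemize}
\item Let $\{a_i\}$ be the first column and $\{b_j\}$ the chosen elements of the second column. Deleting the relations $a_i\prec b_j$ gives $\bar P$. This is again an array TAOS (the first two columns merge), $R(\bar P)$ is an ideal, and the new vectors $(a_i,b_j)$ are never outputs of brackets.
\item The exchange condition says $a_i\prec\alpha\iff b_j\prec\alpha$ for every $\alpha$ that is neither an $a_k$ nor a $b_k$. This yields an automorphism of $\bar P$ swapping $a_i$ and $b_j$.
\item The inductively built soliton on $R(\bar P)$ is $\Aut(\bar P)$-invariant, so $|(a_i,y)|=|(b_j,y)|$. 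Hence every bracket coefficient involving the new vectors equals $1/|(a_i,b_j)|$, and the old diagonal derivation extended by $0$ on the new vectors is still a derivation.
\item The single choice $|(a_i,b_j)|^2=\frac12\sum_{t=1}^r m_t$ then makes the correction term a derivation. The only nontrivial identity is $\sum_{t\ge3}m_t+m_2+m_1=\sum_{t=1}^r m_t$. Here the three summands count, respectively, the $y\succ b_j$, the $b_k$ with $a_i\prec b_k\prec y$, and the $a_k\prec b_j$.
\end{itemize}
No linear system has to be solved.
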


We also give a detailed description of the left-invariant Ricci solitons constructed in our theorem. Moreover, using the concept of algebraic connectivity of quivers (cf. \cite{zheng2026standardpolynomialsprincipalsubalgebras}), we obtain a generalization of Theorem~\ref{Thm:array TAOS Lie group}.

\begin{thm}
Let $P$ be a finite TAOS and $Q$ be its Hasse quiver.  Suppose  all algebraically connected components of $Q$ are Hasse quivers of some array TAOSs. If
for any two distinct algebraically connected components \(Q^{i}\) and
\(Q^{j}\) of \(Q\), there do not exist paths of length at least two \(p_i\) in
\(Q^{i}\) and \(p_j\) in \(Q^{j}\) such that $s(p_i)=s(p_j)$, $t(p_i)=t(p_j)$, 
then the simply-connected Lie group obtained from $P$ admits a left-invariant Ricci soliton.
\end{thm}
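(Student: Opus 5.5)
The plan is to reduce the statement to Theorem~\ref{Thm:array TAOS Lie group} by showing that the Lie algebra $\g_P$ of $P$ is, up to an explicit gluing, built from the Lie algebras of the algebraically connected components. Then I would verify the nice-basis criterion for algebraic Ricci solitons, as in the modified Mizoguchi--Tamaru method, component by component.

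\textbf{Step 1: the basis.} The incidence algebra of $P$ has the basis $\{(s,t)\}$, which is nice in the sense of Nikolayevsky: each bracket of two basis vectors is $\pm$ a basis vector or zero, and the structure constants are $0,\pm1$. By the Nikolayevsky/Payne criterion, $\g_P$ with the metric making some rescaling $\{c_{(s,t)}(s,t)\}$ orthonormal is a nilsoliton iff the Gram system $U v = [1]$ has a solution with all entries positive. Here $U$ is the Gram matrix of the vectors $e_a+e_b-e_c$, one for each nonzero bracket $[a,b]=\pm c$.

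\textbf{Step 2: block decomposition.} Each basis element $(s,t)$ with $s\prec t$ corresponds to a path class in the Hasse quiver $Q$, hence to a pair of vertices. A nonzero bracket triple $((s,u),(u,t),(s,t))$ lives in one algebraically connected component whenever the paths involved have length at least two in a common component. The hypothesis says no pair $(s,t)$ with $t$ reachable from $s$ by paths of length at least two arises in two different components. I would use this to show that the set of bracket triples splits as a disjoint union over components $Q^i$. Length-one arrows shared between components lie in no nontrivial triple in which they are the sum. The key point is that every triple $[a,b]=c$ has $c$ equal to a path of length at least two, and the factorisations of $c$ all lie in the component determined by $c$. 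Consequently $U$ becomes block diagonal after reordering, with blocks $U^i$ equal to the Gram matrices of the array TAOS Lie algebras $\g_{P^i}$, where $P^i$ is the array TAOS with Hasse quiver $Q^i$. Basis vectors not in any triple contribute a direction on which the system imposes nothing.

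\textbf{Step 3: conclusion.} By the proof of Theorem~\ref{Thm:array TAOS Lie group}, each system $U^i v^i=[1]$ has a positive solution. Concatenating these gives a positive solution of $Uv=[1]$, so $\g_P$ is an algebraic Ricci soliton. Lauret's correspondence then yields a left-invariant Ricci soliton on the simply-connected Lie group.

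\textbf{Main obstacle.} The hard part is Step 2: confirming that the components of $Q$ really decouple the triples. In particular, an element $(s,t)$ may be reachable only through arrows lying in several components, and I must check that algebraic connectivity, as defined in \cite{zheng2026standardpolynomialsprincipalsubalgebras}, forbids this except in the excluded situation. I would first prove a lemma: for $s\prec u\prec t$ with $(s,u),(u,t)$ basis elements, the arrows composing both factors belong to the same algebraic component as any path realizing $(s,t)$. This uses the TAOS relations, namely commutativity of parallel paths, which is what makes components "algebraic". If pure decoupling fails, the fallback is to show that $U$ is a direct sum up to shared rows, and to verify positivity using the explicit solutions from the array case.
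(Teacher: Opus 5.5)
Your argument is correct, but it glues the components differently from the paper. The paper asserts $R(P)\cong\bigoplus_i R(P_i)$ from the definition of algebraically connected components and the hypothesis. It then puts the metric of Theorem~\ref{Thm:Ricci soliton} on each $R(P_i)$, all with the same constant $-1$, and takes their orthogonal sum via Proposition~\ref{prop:direct sum}. No existence criterion is needed. You instead prove the splitting at the level of the nice basis: triples from different components have disjoint supports, so the Gram matrix $U$ is block diagonal. You then read off a positive solution of $U^iv^i=[1]$ from each array metric; for an orthonormalized nice basis with $\Ric=-\id+D$, the squared structure constants satisfy $Uv=2[1]$. Finally you invoke Payne's theorem on all of $R(P)$. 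Your route makes the role of the hypothesis transparent: in the Remark's example, $(x,z)$ is a bracket target in both components, which is exactly what destroys block-diagonality. But it calls on the Payne--Nikolayevsky existence theorem where an elementary observation suffices. Once you know $B(P)=\bigsqcup_i B(P_i)$ with every nonzero bracket inside one piece, $R(P)$ already splits as a Lie algebra, and the orthogonal-sum argument finishes at once. (The final step to a Ricci soliton on the group is the Lafuente--Lauret/Lauret theorem, not Lauret's solv--nil correspondence.)

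The ``main obstacle'' you flag is not a real one, so the fallback is unnecessary. Arrows are never shared: the components are subquivers partitioning $Q_1$, with $\n_Q=\bigoplus_i\n_{Q^i}$ as associative algebras. A product of paths from two different components is therefore zero, so every path of $Q$ lies in a single component. For $s\prec u\prec t$, concatenating any path $s\to u$ with any path $u\to t$ gives a path of length at least two. By the hypothesis, that path lies in the unique component containing the length-$\geq 2$ paths from $s$ to $t$. This is exactly your Step 2. Commutativity of parallel paths is not what makes the components ``algebraic'': the components are defined from $\n_Q$ without relations. The hypothesis is what prevents the relations in $I$ from gluing two components together.
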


We also consider nilpotent Lie algebras obtained from quivers that allow multiple arrows with relations, yielding another generalization of Theorem~\ref{Thm:array TAOS Lie group}.

\begin{thm}
Let $P$ be a finite TAOS, $Q$ be its Hasse quiver.  
Suppose $Q$ satisfies  one of the following: $(\rmn1)$ $Q$ is the Hasse quiver of an array TAOS; $(\rmn2)$  all algebraically connected components of $Q$ are Hasse quivers of some array TAOSs, and 
for any two distinct algebraically connected components \(Q^{i}\) and
\(Q^{j}\) of \(Q\), there do not exist paths of length at least two \(p_i\) in
\(Q^{i}\) and \(p_j\) in \(Q^{j}\) such that $s(p_i)=s(p_j)$, $t(p_i)=t(p_j)$.
Let $Q'$ be the quiver $Q$ with some extra multiple arrows, i.e., $Q'_0 = Q_0$, $Q'_1 = Q_1 \sqcup E$, and for any arrow $\alpha_1 \in E$, there is an arrow $\alpha_2 \in Q_1$ such that $s(\alpha_1) = s(\alpha_2)$ and $t(\alpha_1) = t(\alpha_2)$. Then the simply-connected Lie group $G_{Q'}$ whose corresponding Lie algebra is $\n_{Q'} / I$ admits a left-invariant Ricci soliton, where $I$ is the ideal of $\n_{Q'}$ generated by all differences $\omega_1 - \omega_2$ of two paths $\omega_1$ and $\omega_2$ of length at least two that start at the same vertex and end at the same vertex.
\end{thm}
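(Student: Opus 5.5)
Throughout, an element $x\in\n$ is a linear combination of the basis vectors, so it makes sense to speak of its $\omega$-coefficient for a basis path $\omega$.

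The plan is to reduce to the already-established setting of Theorem~\ref{Thm:array TAOS Lie group} and its generalization (case (ii)), using the Mizoguchi--Tamaru criterion as modified earlier in the paper. Recall that criterion: a nilpotent Lie algebra $\n$ with a basis $\{X_a\}$ whose structure constants are nice (each bracket $[X_a,X_b]$ is a multiple of a single basis vector, and each basis vector arises from pairs $(a,b)$ in a controlled way) is an algebraic Ricci soliton for a suitable diagonal inner product if and only if the linear system $U v = [1]$ has a solution with positive entries. Here $U$ is the Gram matrix of the vectors $Y_{(a,b,c)}=e_a+e_b-e_c$ attached to the nonzero structure constants.

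\textbf{Step 1: a nice basis for $\n_{Q'}/I$.} The quotient $\n_{Q'}/I$ has a basis consisting of the arrows in $Q'_1=Q_1\sqcup E$, together with one class $[\omega]$ for each pair of vertices $(s,t)$ joined by a path of length at least two. All such paths are identified modulo $I$, so these classes correspond exactly to the non-Hasse elements $(s,t)$, $s\prec t$, of the incidence algebra of $P$. Brackets of two arrows, or of an arrow and a class, land on single basis classes. So the basis is nice, and the only new feature compared with $\n_P$ is that an arrow $\alpha_2\in Q_1$ is replaced by a bundle of parallel arrows $\alpha_2,\alpha_1^{(1)},\dots,\alpha_1^{(r)}$ with identical bracket behaviour.

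\textbf{Step 2: reduce the soliton equation.} I would look for a solution of $Uv=[1]$ that is invariant under permuting parallel arrows. Equivalently, I would look for a diagonal derivation $D$ with $\Ric=cI+D$ that takes equal values on parallel arrows. Grouping the triples $(a,b,c)$ by the classes of their arrows, the symmetric ansatz reduces $Uv=[1]$ to a weighted version of the system for $\n_P$. The weight is $r_\alpha+1$, the number of parallel copies of each Hasse arrow $\alpha$, and it enters as a multiplicity on those triples.

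\textbf{Step 3: solve the weighted system.} I expect this to be the main obstacle. The existence of a positive solution should follow by redoing the explicit computation used for array TAOSs, which produced explicit $v$ from the column data $n_i,m_i$, now with multiplicities. My first attempt would be an alternative route that avoids recomputation: rescale the inner product. Take the soliton metric on $\n_P$ with orthonormal basis $\{X_a\}$. Map $\n_{Q'}/I\to\n_P$ by sending each of the $r_\alpha+1$ parallel arrows to $X_\alpha$, and declare the arrow vectors orthogonal with norms chosen so that each Hasse arrow has total squared norm $1$ split equally among its copies. Then compute $\Ric$ via the standard formula for nice bases. This formula is
\[
\Ric(X_c)=\Bigl(-\tfrac12\sum_{a,b}\langle[X_a,X_b],X_c\rangle^2+\tfrac14\sum_{a,b}\langle[X_c,X_a],X_b\rangle^2\Bigr)X_c .
\]
One checks term by term that the contributions from a bundle add up to the original ones. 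Here the antidiagonal directions within a bundle, that is, differences of parallel arrows, have brackets equal to those of a central-type direction of the model and need separate inspection. For case (ii), the hypothesis on distinct algebraically connected components prevents identifications in $I$ across components, so the system splits blockwise as in the generalization, and each block is handled as in case (i).

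Finally, Lauret's correspondence turns the algebraic Ricci soliton on $\n_{Q'}/I$ into a left-invariant Ricci soliton on $G_{Q'}$.
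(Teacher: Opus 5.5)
\textbf{Step~1 is false, and Steps~2--3 depend on it.} Let $\alpha,\alpha'$ be parallel arrows $u\to v$ and let $\gamma$ be an arrow $v\to w$. Then $\alpha\gamma-\alpha'\gamma\in I$, so $[\alpha,\gamma]$ and $[\alpha',\gamma]$ are the same basis class $[\alpha\gamma]$. For the pair $(\gamma,[\alpha\gamma])$, two different basis vectors give a nonzero structure constant, which violates the second condition of Definition~\ref{nice_def}; the paper itself remarks that this basis need not be nice. So neither the $Uv=[1]$ criterion nor the diagonal Ricci formula applies to the arrow basis.

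Concretely, take any inner product making arrows and path classes orthogonal. Formula (\ref{Ric_nil}) then shows that $\langle\Ric(\hat\alpha),\hat\alpha'\rangle$, for the normalized vectors, is a sum of strictly negative terms, one for each basis element composable with $\alpha$. So $\Ric$ is not diagonal on a bundle, and there is no ``weighted system'' to solve. The difference directions you postpone to ``separate inspection'' are exactly where the content lies, and Step~3 as written proves nothing.

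Your explicit rescaling also has the wrong normalization. The vector that plays the role of $X_\alpha$ is the average $a=\frac1k\sum_i a_i$ of the $k$ copies. With your choice (orthogonal copies, each of squared norm $1/k$), $a$ has squared norm $1/k^2$. Since $a$ is orthogonal to the differences, you get the soliton metric of $R(P)$ with one basis vector rescaled, plus an orthogonal abelian factor. This is in general not a soliton: for the chain $1\prec2\prec3\prec4$ with the arrow $1\to2$ doubled, two of the four squared structure constants are multiplied by $4$, and $r_i+r_j-r_k$ is no longer constant over the nonzero brackets. Squared norm $k$ per copy would be needed to get $|a|=|X_\alpha|$.

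\textbf{The missing idea is that these differences are central.} This reduces the theorem to earlier results with no new computation. Take a bundle $a_1,\dots,a_k$ from $u$ to $v$. For every path $\omega$, both $a_j\omega-a_i\omega$ and $\omega a_j-\omega a_i$ lie in $I$, so $a_j'=a_j-\frac1k\sum_i a_i$ ($1\le j\le k-1$) is central in $\n_{Q'}/I$. Hence $\n_{Q'}/I=\n_1\oplus\n_2$ as Lie algebras, where:
\begin{itemize}
\item $\n_2=\mathrm{span}\{a_j'\}$ is abelian;
\item $\n_1$ is spanned by the averages $\frac1k\sum_i a_i$ and the path classes, and sending an average to $(u,v)$ and a class $[\omega]$ to $(s(\omega),t(\omega))$ gives an isomorphism $\n_1\cong R(P)$.
\end{itemize}
Theorem~\ref{Thm:Ricci soliton} in case (i), or Theorem~\ref{Thm:alg conected} in case (ii), then gives a soliton inner product on $\n_1$ with constant $-1$. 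Proposition~\ref{prop:abelian direct sum} absorbs the abelian factor. Part (i) of the Lafuente--Lauret/Lauret theorem, rather than the solvmanifold correspondence, turns the algebraic soliton into a left-invariant one on $G_{Q'}$. This is the paper's proof. The nice basis it implicitly works with is $\{a\}\cup\{a_j'\}\cup\{\text{path classes}\}$, not the arrow basis.
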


It is easy to check that the Ricci soliton nilmanifolds constructed in this paper are generally different from those constructed in \cite{MR4941781}; thus we obtain a large family of examples of Ricci soliton nilmanifolds with arbitrarily high degrees of nilpotency.

\section{Preliminaries}

\subsection{Algebraic Ricci solitons}

In this subsection, we recall some preliminaries about simply-connected Lie groups with left-invariant metrics. A Lie algebra $\g$ with an inner product $\langle\cdot,\cdot\rangle$ is called a metric Lie algebra and is denoted by $(\g, \langle\cdot,\cdot\rangle)$. If $(G, g)$ is the simply-connected Lie group with left-invariant metric corresponding to $(\g, \langle\cdot,\cdot\rangle)$, then the curvatures of $(G, g)$ are completely determined by $(\g, \langle\cdot,\cdot\rangle)$.

\begin{defi}
Let $(\g, \langle\cdot,\cdot\rangle)$ be a metric Lie algebra and $X$, $Y\in\g$. The Levi-Civita connection $\nabla:\g\times\g\to\g$ of $(\g, \langle\cdot,\cdot\rangle)$ is defined by
\[
2\langle \nabla_X Y,Z\rangle=\langle[X,Y],Z\rangle+\langle[Z,X],Y\rangle+\langle X,[Z,Y]\rangle.
\]
\end{defi}

We can also write the Levi-Civita connection as
\[
\nabla_X Y=\frac{1}{2}[X,Y]+U(X,Y),
\]
where $U:\g\times\g\to\g$ is a symmetric bilinear form defined by
\[
2\langle U(X,Y),Z\rangle=\langle[Z,X],Y\rangle+\langle X,[Y,Z]\rangle.
\]

\begin{defi}
Let $(\g, \langle\cdot,\cdot\rangle)$ be a metric Lie algebra and $X$, $Y$, $Z \in\g$.

(\romannumeral1) The Riemannian curvature $R: \g \times \g \times \g \to \g$ is defined by
$$R(X, Y)Z \coloneqq \nabla_X \nabla_Y Z - \nabla_Y \nabla_X Z - \nabla_{[X,Y]} Z.$$

(\romannumeral2) The Ricci curvature $\Ric: \g \to \g$ is defined by
$$\Ric(X) \coloneqq \sum_{i=1}^{\dim\g} R(X,e_i)e_i,$$
where $\{e_i\}$ is an orthonormal basis of $\g$.
\end{defi}

We recall some basic notions about Lie algebras.

\begin{defi}
Let $D^0\g = \g$, $D\g = [\g, \g]$, and $D^k\g = D(D^{k-1}\g)$. Then $\g$ is said to be solvable if $D^r\g = 0$ for some $r$.
\end{defi}

\begin{defi}
Let $C^0\g = \g$, and $C^k\g = [C^{k-1}\g, \g]$. Then $\g$ is said to be $n$-step nilpotent if $C^{n-1}\g \neq 0$ but $C^n\g = 0$.
\end{defi}

\begin{defi}
Let $\g$ be a Lie algebra. A linear map $D: \g \to \g$ is called a derivation if
\[
D([a,b]) = [D(a), b] + [a, D(b)]
\]
for all $a, b \in \g$.
Denote by $\Der(\g)$ the set of all derivations of $\g$.
\end{defi}

We now introduce the concept of algebraic Ricci solitons, which is the central topic of this paper.

\begin{defi}
A metric Lie algebra $(\g, \langle\cdot,\cdot\rangle)$ is said to be an algebraic Ricci soliton if there exist $c \in \mathbb{R}$ and $D \in \Der(\g)$ such that
\[
\Ric = c\cdot \id + D.
\]
\end{defi}

For simply-connected nilpotent Lie groups, left-invariant Ricci solitons correspond exactly to algebraic Ricci solitons.

\begin{thm}[Lafuente--Lauret (\cite{MR3230368}), Lauret (\cite{MR1825405})]
Let $(G, g)$ be a simply-connected Lie group with a left-invariant metric, and let $(\g, \langle\cdot,\cdot\rangle)$ be the metric Lie algebra corresponding to $(G, g)$. Then the following hold.

(\romannumeral1) If $(\g, \langle\cdot,\cdot\rangle)$ is an algebraic Ricci soliton, then $(G, g)$ is a Ricci soliton.

(\romannumeral2) If $G$ is nilpotent and $(G, g)$ is a Ricci soliton, then $(\g, \langle\cdot,\cdot\rangle)$ is an algebraic Ricci soliton.
\end{thm}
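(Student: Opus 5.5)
The statement splits into two independent parts, and I would treat them separately: (i) is a direct construction, while (ii) needs structure theory specific to nilmanifolds.

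For (i), suppose $\Ric = c\cdot\id + D$ with $D\in\Der(\g)$.

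1. Since $\Ric$ is self-adjoint with respect to $\langle\cdot,\cdot\rangle$, the derivation $D=\Ric-c\cdot\id$ is automatically symmetric.
2. Because $D$ is a derivation, $e^{tD}$ is a one-parameter group of automorphisms of $\g$. As $G$ is simply connected, it integrates to a one-parameter group of automorphisms $\varphi_t$ of $G$ with $d\varphi_t|_e=e^{tD}$.
3. Let $X$ be the vector field generating $\varphi_t$. Each $\varphi_t$ is an automorphism, so $\varphi_t^*g$ is again left-invariant, and at $e$ it equals $\langle e^{tD}\cdot,e^{tD}\cdot\rangle$.
4. Differentiating at $t=0$ gives $\mathcal{L}_Xg=2\langle D\cdot,\cdot\rangle$ at $e$, using the symmetry of $D$. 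Both sides are left-invariant tensors, so the identity holds on all of $G$.
5. The Ricci tensor of $g$ is the left-invariant tensor $\langle \Ric\cdot,\cdot\rangle$. Hence
\[
\ric_g=c\cdot g+\tfrac12\mathcal{L}_Xg=c\cdot g+\mathcal{L}_{X/2}g,
\]
so $g$ is a Ricci soliton with vector field $X/2$.

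For (ii), I would follow the Lafuente--Lauret strategy and first upgrade an arbitrary soliton vector field to one coming from automorphisms.

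1. Homogeneous Ricci soliton metrics evolve under the Ricci flow by scaling and diffeomorphisms. Uniqueness of the homogeneous Ricci flow implies that these diffeomorphisms can be taken to normalize the isometry group. This makes $g$ \emph{semi-algebraic}: there are automorphisms $\psi_t$ of $G$ with $g(t)=c(t)\psi_t^*g$.
2. For a nilmanifold, Wilson's theorem gives $\mathrm{Isom}(G,g)=G\rtimes K$, where $K$ is the group of orthogonal automorphisms. Using this, the semi-algebraic condition becomes
\[
\Ric=c\cdot\id+\tfrac12(D+D^t)
\]
for some $D\in\Der(\g)$.
3. It remains to show that the symmetric part $\tfrac12(D+D^t)$ is itself a derivation. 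For this I would use Lauret's description of $\Ric$ on a nilpotent metric Lie algebra as the moment map for the $GL(\g)$-action on brackets. This gives
\[
\operatorname{tr}(\Ric\, E)=0\quad\text{for every } E\in\Der(\g),
\]
as well as $\operatorname{tr}\Ric<0$.
4. Apply this with $E=D$ and with $E=[D,D^t]$, using that $D$ and $\Ric$ commute. The expected outcome is that $D-D^t$ acts as a skew-symmetric derivation. Then $D^t$, and hence $\tfrac12(D+D^t)$, is a derivation, which is exactly the algebraic soliton condition.

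The main obstacle is step 1 of (ii): passing from a general soliton vector field to a semi-algebraic one. This requires the dynamical input that homogeneous Ricci flow solutions are unique and preserve the symmetry group. I would take this from Lafuente--Lauret rather than reprove it. The final symmetrization step in (ii) is a trace computation and should be routine once the moment-map identity is available.
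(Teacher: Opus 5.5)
Your part (i) is correct; it is Lauret's standard argument: $D$ is symmetric, $e^{tD}$ integrates to automorphisms, and $\mathcal{L}_Xg=2\langle D\cdot,\cdot\rangle$. The paper only quotes this theorem, so deferring step 1 of (ii) to Lafuente--Lauret is in line with how it is used here.

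\textbf{The gap.} Step 4 of (ii), which you call routine, does not work as written.

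\emph{Circularity.} The identity $\operatorname{tr}(\Ric\,E)=0$ is only available for $E\in\Der(\g)$. But $[D,D^t]$ is not known to be a derivation: it would be if $D^t$ were, and that is exactly what you are trying to prove.

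\emph{Unavailable commutation.} $D$ and $\Ric$ do not commute in general. Since $\Ric-c\cdot\id=\tfrac12(D+D^t)$, you have $[D,\Ric]=\tfrac12[D,D^t]$. So commutation is equivalent to $D$ being normal, which is not part of the semi-algebraic data.

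\emph{Insufficient tools.} Scalar identities tested only against derivations, together with $\operatorname{tr}\Ric<0$, cannot force $D^t$ to be a derivation.

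\textbf{The missing ingredient.} You need the moment-map identity for \emph{all} $E\in\mathfrak{gl}(\g)$, not just for derivations. Write $\mu$ for the bracket of $\g$ and set $\pi(E)\mu=E\mu(\cdot,\cdot)-\mu(E\cdot,\cdot)-\mu(\cdot,E\cdot)$, using the inner product on brackets induced by $\langle\cdot,\cdot\rangle$. For nilpotent $\g$,
\[
\operatorname{tr}(\Ric\,E)=\tfrac14\langle\pi(E)\mu,\mu\rangle \quad\text{for every } E\in\mathfrak{gl}(\g),
\]
where $\pi$ is a Lie algebra representation with $\pi(E)^t=\pi(E^t)$. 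Since $\pi(D)\mu=0$,
\[
\|\pi(D^t)\mu\|^2=\langle\pi(D)\pi(D^t)\mu,\mu\rangle=\langle\pi([D,D^t])\mu,\mu\rangle+\|\pi(D)\mu\|^2=4\operatorname{tr}(\Ric\,[D,D^t]).
\]
By cyclicity of the trace, $\operatorname{tr}(X[X,Y])=\operatorname{tr}(Y[X,Y])=0$ for any $X,Y$. Hence
\[
\operatorname{tr}\bigl((c\cdot\id+\tfrac12(D+D^t))[D,D^t]\bigr)=0 .
\]
Therefore $\pi(D^t)\mu=0$, i.e.\ $D^t\in\Der(\g)$, and $\Ric-c\cdot\id=\tfrac12(D+D^t)$ is a derivation. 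With this replacement no commutation hypothesis is needed, and the sign of $\operatorname{tr}\Ric$ plays no role.
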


In view of the generalized Alekseevskii conjecture, resolved in \cite{MR4583772}, the study of Ricci soliton solvmanifolds is of fundamental importance. Moreover, Lauret's well-known work (\cite{MR2770554}) establishes a correspondence between Ricci soliton solvmanifolds and Ricci soliton nilmanifolds.

\begin{thm}[Lauret (\cite{MR2770554})]
\renewcommand{\labelenumi}{(\arabic{enumi})}
(\romannumeral1) Let $(S, g)$ be a solvmanifold, let $(\mathfrak{s}, \langle\cdot,\cdot\rangle)$ be the metric Lie algebra corresponding to $(S, g)$, which is an algebraic Ricci soliton, and let $\n$ be the nilradical of $\mathfrak{s}$. Then the simply-connected nilmanifold corresponding to $(\n, \langle\cdot,\cdot\rangle|_{\n\times\n})$ is a Ricci soliton.

(\romannumeral2) Let $(N, g)$ be a Ricci soliton nilmanifold, and let $(\n, \langle\cdot,\cdot\rangle)$ be the metric Lie algebra corresponding to $(N, g)$. Then there exists a solvable metric Lie algebra $(\mathfrak{s}, \langle\cdot,\cdot\rangle')$ such that the corresponding simply-connected solvmanifold is a Ricci soliton, $\n$ is the nilradical of $\mathfrak{s}$, and $\langle\cdot,\cdot\rangle'|_{\n\times\n} = \langle\cdot,\cdot\rangle$.
\end{thm}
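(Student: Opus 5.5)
Since this theorem is quoted from \cite{MR2770554}, the following is only a sketch of how I would reconstruct it. The tool throughout is the standard Ricci formula for a metric solvable Lie algebra written as an orthogonal sum $\mathfrak{s}=\mathfrak{a}\oplus\n$, where $\n$ is the nilradical. Let $H\in\mathfrak{a}$ be the mean curvature vector, defined by $\langle H,X\rangle=\operatorname{tr}\operatorname{ad}X$, and let $S(\cdot)$ denote the symmetric part of an operator. Then, on $\n$,
\[
\Ric_{\mathfrak{s}}|_{\n}=\Ric_{\n}-S(\operatorname{ad}H|_{\n})+\tfrac12\sum_i[\operatorname{ad}A_i|_{\n},(\operatorname{ad}A_i|_{\n})^t],
\]
where $\{A_i\}$ is an orthonormal basis of $\mathfrak{a}$. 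There are corresponding expressions for the $\mathfrak{a}\times\mathfrak{a}$ and $\mathfrak{a}\times\n$ blocks. I also use the nilpotent identity $\operatorname{tr}(\Ric_{\n}E)=0$ for every derivation $E$ of $\n$ that is symmetric with respect to the inner product. This holds because $\Ric_{\n}$ is, up to a constant, the moment map of the $\mathrm{GL}$-action on brackets, and a derivation lies in the stabilizer of the bracket.

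For (ii), let $\Ric_{\n}=c\cdot\id+D$ with $D\in\Der(\n)$. If $\n$ is abelian there is nothing to prove, since then $\mathfrak{s}=\n$ works. Otherwise I first replace $D$ by its symmetric part, which is again a derivation, so that $D$ may be assumed symmetric. The trace identity gives $\operatorname{tr}\Ric_{\n}=-\tfrac14|\mu|^2<0$, so $c<0$. Applying $\operatorname{tr}(\Ric_{\n}D)=0$ yields
\[
\operatorname{tr}D^2=-c\operatorname{tr}D,
\]
and hence $\operatorname{tr}D>0$.

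Now form the rank-one extension $\mathfrak{s}=\R A\oplus\n$, with $A$ a unit vector orthogonal to $\n$ and $\operatorname{ad}A|_{\n}=tD$, where $t^2=1/\operatorname{tr}D$.
\begin{itemize}
\item Since $\operatorname{ad}A$ is symmetric, the commutator term vanishes.
\item We have $H=t\operatorname{tr}D\cdot A$, so $S(\operatorname{ad}H)=t^2\operatorname{tr}D\cdot D=D$, and therefore $\Ric_{\mathfrak{s}}|_{\n}=c\cdot\id$.
\item We have $\Ric_{\mathfrak{s}}(A,A)=-\operatorname{tr}(\operatorname{ad}A)^2=-t^2\operatorname{tr}D^2=c$, by the trace identity above.
\item The mixed terms $\Ric_{\mathfrak{s}}(A,\n)$ vanish, because $\operatorname{tr}(\operatorname{ad}A\circ\operatorname{ad}X)=0$ for $X\in\n$ ($\operatorname{ad}X$ is nilpotent and $D$ preserves the grading it induces).
\end{itemize}
Thus $\mathfrak{s}$ is Einstein, in particular an algebraic Ricci soliton. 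Its nilradical is $\n$, because $\operatorname{ad}A$ is not nilpotent, and its metric restricts to the given one on $\n$.

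For (i), start from $\Ric_{\mathfrak{s}}=c\cdot\id+D$ with $D\in\Der(\mathfrak{s})$. The key step, which I expect to be the main obstacle, is a structural normalization. One shows that $\mathfrak{a}=\n^\perp$ is an abelian subalgebra and that each $\operatorname{ad}A|_{\n}$, $A\in\mathfrak{a}$, is a normal operator. The argument I would try first takes the trace of the soliton equation against suitable derivations. From the $\mathfrak{a}\times\mathfrak{a}$ block one gets $\operatorname{tr}(\Ric_{\mathfrak{s}}\operatorname{ad}A)$ in two ways. Comparing them forces the commutator terms $\operatorname{tr}[\operatorname{ad}A,\operatorname{ad}A^t]^2$ to vanish, which gives normality. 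One also uses that $D$ maps $\mathfrak{s}$ into the nilradical up to its component on $\mathfrak{a}$.

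Once this normalization is in hand, restricting the soliton equation to $\n$ gives
\[
\Ric_{\n}=c\cdot\id+\bigl(D|_{\n}\bigr)^{\n}+S(\operatorname{ad}H|_{\n}),
\]
where the superscript denotes the $\n$-component. Both $\bigl(D|_{\n}\bigr)^{\n}$ and $S(\operatorname{ad}H|_{\n})$ are derivations of $\n$; the latter is a derivation precisely because $\operatorname{ad}H$ is normal. Hence $(\n,\langle\cdot,\cdot\rangle|_{\n\times\n})$ is an algebraic Ricci soliton. By the Lafuente--Lauret theorem quoted above, the corresponding simply-connected nilmanifold is then a Ricci soliton.
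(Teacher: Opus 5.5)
\emph{Context.} The paper does not prove this statement. It is quoted from Lauret \cite{MR2770554} as background, so your sketch has to be judged on its own.

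\emph{Part (ii) is fine.} The rank-one extension $\R A\oplus\n$ with $\operatorname{ad}A|_{\n}=tD$ and $t^{2}=1/\operatorname{tr}D$ is the standard construction, and your block-by-block check that it is Einstein is correct. A few small repairs are needed:
\begin{itemize}
\item $c<0$ does not follow from $\operatorname{tr}\Ric_{\n}<0$ alone. It follows from $\operatorname{tr}\Ric_{\n}^{2}=c\operatorname{tr}\Ric_{\n}$, which is the identity $\operatorname{tr}(\Ric_{\n}D)=0$.
\item You must check $D\neq 0$; otherwise $t$ is undefined and $\n$ would not be the nilradical. If $D=0$, then $c=\operatorname{tr}\Ric_{\n}/\dim\n<0$, which contradicts $\langle\Ric_{\n}Z,Z\rangle>0$ for $0\neq Z\in[\n,\n]\cap\mathfrak z(\n)$.
\item $D=\Ric_{\n}-c\cdot\id$ is automatically symmetric, so the ``symmetric part'' step is unnecessary. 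For a general derivation that step would be false.
\item Getting $\Ric_{\n}=c\cdot\id+D$ from a Ricci soliton nilmanifold is the quoted Lafuente--Lauret theorem.
\end{itemize}

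\emph{Part (i) has a genuine gap.} Its entire content is the ``structural normalization'', which you only announce. Your formula for the $\n$-block of $\Ric_{\mathfrak s}$ holds only when $\mathfrak a=\n^{\perp}$ is abelian. In general it carries the extra term $\frac14\sum_{i,j}\langle[A_i,A_j],\cdot\rangle[A_i,A_j]$, coming from the brackets $\mathfrak a\times\mathfrak a\to\n$. This term is positive semidefinite and not a derivation in general, so your last step does not go through without $[\mathfrak a,\mathfrak a]=0$.

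Already in the Einstein case $D=0$, this is essentially the standardness problem for Einstein solvmanifolds. Heber posed it, and his trace identities presuppose standardness. Lauret settled it (Ann.\ of Math.\ 2010) using the stratification of the moment map on the variety of Lie brackets, a real GIT argument. So ``taking traces against suitable derivations'' is not a route to this step. You must import that theorem or its machinery.

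\emph{How the rest can be finished, given standardness.} Granting $[\mathfrak a,\mathfrak a]=0$, the rest does go through by traces, but not by the comparison you describe.
\begin{itemize}
\item Derivations of a solvable Lie algebra map it into the nilradical, and $D$ is symmetric, so $D|_{\mathfrak a}=0$. Hence $D$ commutes with every $\operatorname{ad}A$, and so does $\operatorname{ad}H$ since $H\in\mathfrak a$.
\item Put $E_i=\operatorname{ad}A_i|_{\n}$ and $C=\sum_i[E_i,E_i^{t}]$. Pair the identity $c\cdot\id+D|_{\n}=\Ric_{\n}-S(\operatorname{ad}H|_{\n})+\frac12 C$ with $C$. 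The terms involving $c$, $D$ and $S(\operatorname{ad}H)$ drop out.
\item Use $\operatorname{tr}(\Ric_{\n}[E,E^{t}])=\frac14\|\pi(E^{t})\mu\|^{2}$ for $E\in\Der(\n)$, where $\pi(E)\mu=E\mu-\mu(E\cdot,\cdot)-\mu(\cdot,E\cdot)$. This gives $\frac14\sum_i\|\pi(E_i^{t})\mu\|^{2}+\frac12\operatorname{tr}C^{2}=0$.
\item Hence $C=0$ and every $E_i^{t}$ is a derivation. Then $S(\operatorname{ad}H|_{\n})\in\Der(\n)$, and your final formula for $\Ric_{\n}$ follows.
\end{itemize}
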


For metric nilpotent Lie algebras, the Ricci curvature satisfies the following formula (cf. \cite{MR2537049}, \cite{MR2792974}), which is more convenient for computation. Let $(\n,\langle\cdot,\cdot\rangle)$ be a metric nilpotent Lie algebra, and let $\{X_1,\dots,X_n\}$ be an orthonormal basis of $(\n,\langle\cdot,\cdot\rangle)$. Then
\begin{equation}\label{Ric_nil}
\begin{split}
\langle\mathrm{Ric}(X),Y\rangle &= -\frac{1}{2}\sum_{i,j}\langle[X,X_i],X_j\rangle\langle[Y,X_i],X_j\rangle \\
&\quad + \frac{1}{4}\sum_{i,j}\langle[X_i,X_j],X\rangle\langle[X_i,X_j],Y\rangle.
\end{split}
\end{equation}

\subsection{Nilpotent Lie algebras obtained by graphs and quivers}

In this subsection, we recall some nilpotent Lie algebras obtained from graphs and quivers.

\begin{defi}
A directed simple graph is an ordered pair $(V,E)$ consisting of a set of vertices $V$ and a set of edges $E \subseteq \{(x, y) \in V \times V \mid x \neq y\}$. For $e = (x, y) \in E$, the vertex $x$ is called the source of $e$ and $y$ is called the target of $e$.

A quiver $Q$ is a quadruple $Q = (Q_0, Q_1, s, t)$ where
\begin{itemize}
\item $Q_0$ is a finite set of vertices,
\item $Q_1$ is a finite set of arrows,
\item $s, t: Q_1 \to Q_0$ are two functions. For an arrow $a \in Q_1$, $s(a)$ and $t(a)$ are called the starting vertex and the terminal vertex of $a$, respectively.
\end{itemize}
\end{defi}

A quiver is a directed graph where loops and multiple arrows between two vertices are allowed. Throughout this paper, we only consider finite acyclic quivers. We refer to \cite{MR3727119} and \cite{MR3308668} for quivers and path algebras.

\begin{defi}
$(\rmn1)$ (Dani--Mainkar \cite{MR2140439}). Let $(V, E)$ be a directed simple graph. The space $\n$ is defined as an $\mathbb{R}$-vector space with basis $V \cup E$. The space $\n$ becomes a $2$-step nilpotent Lie algebra with Lie bracket given by
\[
[x,y]=
\begin{cases}
e, & \text{if } x,y\in V,\ e=(x,y), \\
-e, & \text{if } x,y\in V,\ e=(y,x), \\
0, & \text{otherwise}.
\end{cases}
\]

$(\rmn2)$ (Mizoguchi--Tamaru \cite{MR4941781}). Let $Q = (Q_0, Q_1, s, t)$ be a quiver. The path algebra $\n_Q$ is defined as an $\mathbb{R}$-vector space with basis $\Path(Q)$ and the following product: it is bilinear, and the product of two paths $x$ and $y$ is defined by concatenation,
\[
x\cdot y=
\begin{cases}
xy, & \text{if } xy \text{ is a path}, \\
0, & \text{otherwise}.
\end{cases}
\]
The space $\n_Q$ equipped with the bracket $[x, y] \coloneqq x\cdot y - y\cdot x$ is called the Lie algebra obtained from a quiver.
\end{defi}
Note that path algebras usually include vertices as paths of length zero, but in the above definition, only paths of length $\geq 1$ are considered.

\begin{thm}
$(\rmn1)$ (Lauret--Will \cite{MR2785768}). The nilpotent Lie algebra obtained from a graph admits an algebraic Ricci soliton if and only if the graph is ``positive''.

$(\rmn2)$ (Mizoguchi--Tamaru \cite{MR4941781}). The nilpotent Lie algebra obtained from a finite acyclic quiver admits an algebraic Ricci soliton.
\end{thm}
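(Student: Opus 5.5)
For part~(i) we would not reprove anything. We would simply invoke Lauret--Will \cite{MR2785768}: their result is exactly the classification of the graphs $(V,E)$ for which the Dani--Mainkar algebra is a soliton. So the work is in part~(ii), and we plan to follow the \emph{nice basis} strategy that Mizoguchi and Tamaru use.

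The first step is to show that $\Path(Q)$ is a nice basis of $\n_Q$, in the following sense.
\begin{itemize}
\item For paths $x,y$ the bracket $[x,y]$ is $\pm$ a single path or $0$. This holds because $Q$ is acyclic, so $xy$ and $yx$ cannot both be nonzero.
\item For paths $x,z$ there is at most one path $y$ with $z$ occurring in $[x,y]$. Indeed, $y$ is forced to be the complementary subpath of $z$ when $x$ is an initial or final segment of $z$.
\item Dually, the same uniqueness holds for the occurrence of $z$ in the bracket of $x$ with the other argument.
\end{itemize}
By Nikolayevsky's and Payne's criterion for nice bases, we then only need inner products that make $\Path(Q)$ orthogonal, with arbitrary lengths. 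For such a metric both $\Ric$ and the candidate derivation are diagonal. The soliton condition then reduces to a linear problem: the set $\mathcal{T}=\{(x,y,z) : xy=z\}$ of structure triples is finite, the Gram matrix $U$ of the vectors $Y_{(x,y,z)}=e_x+e_y-e_z$ must satisfy
\[
U v=[1]
\]
for a vector $v=(v_\tau)_{\tau\in\mathcal{T}}$ with all entries positive, and then the squared structure constants are $v_\tau$.

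The second step is to find such a positive $v$ explicitly, and this is where we expect the main obstacle. Equivalently, we must show that the centroid-type vector lies in the relative interior of the convex hull of the $Y_\tau$. The difficulty is that the number of decompositions $z=xy$, and the number of extensions of a given path, vary with the position of the path inside $Q$. The simplest guess, $v_\tau$ depending only on the length of $z$, therefore fails in general.

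What we would try first is to write the diagonal soliton equations directly. For each path $p$, $\Ric(p)$ is $-\tfrac12$ times the sum of squared constants over the extensions of $p$, plus $\tfrac12$ times the sum over the factorizations of $p$. We then look for a derivation of the form $D(p)=\sum_{a\in p}\lambda_a$, with parameters attached to the arrows, or alternatively to vertex potentials $\mu_{t(p)}-\mu_{s(p)}$ plus a term proportional to the length of $p$. We would solve the equations by induction on the longest path of $Q$. The plan is to remove a sink (or a source), assume a solution for the smaller quiver, and then rescale and perturb it so that the new paths ending at the removed vertex can be absorbed. Positivity would be maintained by choosing the constants of the longer paths small relative to the shorter ones.

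If this induction breaks down, the fallback is Nikolayevsky's convexity formulation. There we would argue that $[1]$ lies in the relative interior of the convex hull of the $Y_\tau$ by averaging over a symmetric family of solutions. We would obtain these solutions vertex by vertex and use linearity in the weights of the triples passing through each vertex. Once a positive $v$ exists, the left-invariant soliton on the Lie group follows from the Lafuente--Lauret and Lauret theorem stated above.
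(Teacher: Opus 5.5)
\textbf{Part (i) and the reduction in (ii) are fine; the existence step in (ii) is missing.} For (i) you do what the paper does. The paper proves neither part: it quotes Lauret--Will for (i) and Mizoguchi--Tamaru for (ii). For (ii), however, you set out to reprove the theorem, and the proposal stops exactly where the theorem has content. Your reduction is correct: $\Path(Q)$ is a nice basis, and by the Payne/Nikolayevsky criterion a diagonal nilsoliton exists once $Uv=[1]$ has a solution with all $v_\tau>0$. (Strictly, you cannot prescribe the squared structure constants to be an arbitrary such $v$. Rescaling the path basis only reaches vectors of the form $v_\tau=|xy|^2/(|x|^2|y|^2)$, so you need Nikolayevsky's theorem to get a realizable solution.) But you never show that a positive $v$ exists. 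Your second step is a list of things you ``would try'', and you flag it yourself as the main obstacle. That existence statement \emph{is} the Mizoguchi--Tamaru theorem.

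\textbf{Why the sketched induction does not close.} Adjoin a sink $w$. Each old path $p$ that can be continued to $w$ picks up a correction $C(p)=-\frac12\sum_y |py|^2/(|p|^2|y|^2)$ in its Ricci eigenvalue, where $y$ runs over paths from $t(p)$ to $w$. If you keep the old metric up to scale, the old equations survive only if $C=\kappa\,\id+\delta$ with $\delta$ a diagonal derivation. That means $C(p_1p_2)=C(p_1)+C(p_2)-\kappa$ for all composable old paths. At the same time, the new paths ending at $w$ must satisfy their own equations with the same constant. ``Rescale and perturb'' gives no reason for either condition to hold.

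\textbf{The new weights cannot be chosen small.} The equation $Uv=[1]$ is equivalent to $\sum_\tau v_\tau Y_\tau=w_Q$. Here $w_Q$ is the unique vector in the span of the $Y_\tau$ with $\langle Y_\tau,w_Q\rangle=1$ for all $\tau$, so it depends only on the set of triples. Hence $v$ is fixed modulo linear relations among the $Y_\tau$, and its positivity is exactly what must be proved.

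\textbf{The fallback has the same defect.} You may average nonnegative solutions of the \emph{full} system. Solutions built ``vertex by vertex'' for subsystems are not solutions of the full system, so averaging them proves nothing. Also, the convexity criterion concerns the point $w_Q/|w_Q|^2$, the point of $\mathrm{aff}\{Y_\tau\}$ nearest the origin. It does not concern $[1]\in\R^{\mathcal T}$.

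\textbf{What such an induction needs.} Compare the paper's own induction for array TAOSs, which it describes as a modification of Mizoguchi--Tamaru's method. The correction is forced to be a derivation by four ingredients working together:
\begin{itemize}
\item $R(\bar{P})$ is an ideal with complement $S$.
\item The inductively built metric is $\Aut$-invariant, and with the exchange condition this forces $|(x_{i1},y)|=|(x_{i2},y)|$. So every new structure constant equals $1/|(x_{i1},x_{i2})|$.
\item An $\Aut(\bar{P})$-equivariant diagonal derivation extends by zero on $S$.
\item The uniform choice $|(x_{i1},x_{i2})|^2=\frac12\sum_t m_t$ makes the correction a derivation via a counting identity.
\end{itemize}
Your plan has no substitute for the equal-norm/symmetry step or the derivation-extension step. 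Without an analogous device for arbitrary acyclic quivers, part (ii) should simply be cited, as the paper does.
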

We refer to \cite{MR2785768} for the definition of the positivity of a graph.

\section{Nilpotent Lie algebras obtained by ordered sets}

In this section, we review some preliminaries on ordered sets and construct Lie algebras from them. We refer to \cite{MR732192} and \cite{MR1442260} for ordered sets and incidence algebras.

\begin{defi}
A transitively and antisymmetrically ordered set $P$ (or TAOS, for short) is a set (which by abuse of notation we also denote by $P$), together with a binary relation denoted $  \prec    $, satisfying the following two axioms:
\begin{itemize}
\item If $s  \prec    t$ and $t  \prec    s$, then $s=t$ (antisymmetry).
\item If $s  \prec    t$ and $t  \prec    u$, then $s  \prec    u$ (transitivity).
\end{itemize}
The binary relation is not necessarily reflexive or irreflexive. Moreover, both partially ordered sets and strictly partially ordered sets are  TAOSs.

$P$ is said to be finite if its cardinality is finite. We use the obvious notation $t  \succ    s$ to mean $s  \prec    t$. Two elements $s$ and $t$ of $P$ are said to be comparable if $s  \prec    t$ or $t  \prec    s$; otherwise, they are incomparable.

A chain in $P$ is a subset $C \subseteq P$ such that for every $s \neq t \in C$, either $s  \prec    t$ or $t  \prec    s$. The length of a chain is its cardinality.
\end{defi}

We refer to \cite{ParteeTerMeulenWall1993} for reflexivity, irreflexivity, partially ordered sets, and strictly partially ordered sets.

\begin{ex}
Let $P= \{ u, v  \}$. Consider the following three binary relations $\prec_1$, $\prec_2$, $\prec_3$ on $P$, where $u \prec_1 u \prec_1 v \prec_1 v$, $u \prec_2 u \prec_2 v$, $u \prec_3 v$. $\prec_1 $ is transitive, antisymmetric and reflexive; $\prec_2$ is transitive and antisymmetric;  $\prec_3 $ is transitive, antisymmetric and irreflexive. So, $(P, \prec_i)$ is a TAOS for each $i$. Moreover, $(P, \prec_1) $ is  a  partially ordered set, and  $(P, \prec_3)$ is  a strictly partially ordered set.

\end{ex}

For a TAOS $P$, we can construct a quiver $Q$ called the Hasse quiver of $P$ as follows.

\begin{defi}
The Hasse quiver $Q$ of a TAOS $P$ is a quadruple $Q = (Q_0, Q_1, s, t)$ where
\begin{itemize}
\item the vertex set $Q_0$ consists of all elements of $P$,
\item the arrow set $Q_1 = \{a \times b \in P \times P \mid a   \prec     b,\ a \neq b,\ \text{and there exists no } t \linebreak[4] \in P \setminus \{a,b\} \text{ such that } a   \prec     t   \prec     b\}$,
\item $s, t: Q_1 \to Q_0$ are two functions. For an arrow $a \times b \in Q_1$, $s(a \times b) = a$ and $t(a \times b) = b$, which are called the starting vertex and the terminal vertex of $a \times b$, respectively.
\end{itemize}
\end{defi}

\begin{ex}
Let $P$ be a TAOS consisting of $n$ integers $1, 2, \dots, n$ satisfying $1   \prec     2   \prec     \dots   \prec     n$. The Hasse quiver of $P$ is
\[\begin{tikzcd}
1 & 2 & \cdots & n
\arrow[from=1-1, to=1-2]
\arrow[from=1-2, to=1-3]
\arrow[from=1-3, to=1-4]
\end{tikzcd}\]
\end{ex}

We can construct a Lie algebra $R(P)$ from a TAOS $P$, called the incidence algebra of $P$, as follows.

\begin{defi}
The incidence algebra $R(P)$ is defined as an $\R$-vector space with basis
\[
B(P) := \{(s,t) \mid s,t \in P,\ s   \prec     t\} \setminus \{(u,u) \mid u \in P,\ u   \prec     u\}
\]
and the following product: it is bilinear, and the product of two basis elements $(s,t)$ and $(u,v)$ is defined by
\[
(s,t) \cdot (u,v) =
\begin{cases}
(s,v), & \text{if } t = u, \\
0, & \text{if } t \neq u.
\end{cases}
\]
The space $R(P)$ equipped with the bracket $[(s,t), (u,v)] \coloneqq (s,t) \cdot (u,v) - (u,v) \cdot (s,t)$ is called the Lie algebra obtained from a TAOS.
\end{defi}

\begin{prop}\label{prop:associative algebra iso}
Let $P$ be a finite TAOS and $Q$ be the Hasse quiver of $P$. Then $R(P)$ is isomorphic to $\n_Q / I$ as Lie algebras, where $\n_Q$ is the path algebra of $Q$ and $I$ is the ideal of $\n_Q$ generated by all differences $\omega_1 - \omega_2$ of two paths $\omega_1$ and $\omega_2$ of length at least two that start at the same vertex and end at the same vertex.
\end{prop}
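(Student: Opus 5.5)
The plan is to build an explicit associative algebra surjection $\Phi:\n_Q\to R(P)$ with kernel exactly $I$. Since the Lie brackets on both sides are commutators of the associative products, an associative isomorphism $\n_Q/I\cong R(P)$ is automatically a Lie algebra isomorphism. First I record the basic structure of $Q$. Because $P$ is finite and $\prec$ is antisymmetric and transitive, there is no cycle $a_0\prec a_1\prec\cdots\prec a_r= a_0$ through distinct elements: transitivity would give $a_0\prec a_1$ and $a_1\prec a_0$, so $a_0=a_1$. Hence $Q$ is acyclic and has no loops. Moreover, if $s\prec t$ with $s\neq t$, then a maximal chain $s=c_0\prec c_1\prec\cdots\prec c_r=t$ of distinct elements has every step $c_i\times c_{i+1}$ an arrow of $Q$. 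Such a maximal chain exists by finiteness, since chains of distinct elements are bounded in length.

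Define $\Phi$ on a path $p=\alpha_1\cdots\alpha_r$ of length $r\ge 1$ by $\Phi(p)=(s(p),t(p))$. This lies in $B(P)$ because transitivity gives $s(p)\prec t(p)$, and acyclicity gives $s(p)\neq t(p)$. Next I check multiplicativity. If $p\cdot q=pq$ is a path, then $\Phi(pq)=(s(p),t(q))=(s(p),t(p))\cdot(s(q),t(q))$, since $t(p)=s(q)$. Otherwise $t(p)\neq s(q)$, and both sides are $0$. So $\Phi$ is an algebra homomorphism. By the chain argument above every basis element $(s,t)$ is hit, so $\Phi$ is surjective. Every generator $\omega_1-\omega_2$ of $I$ satisfies $\Phi(\omega_1)=\Phi(\omega_2)$, so $I\subseteq\ker\Phi$ and $\Phi$ descends to a surjection $\bar\Phi:\n_Q/I\to R(P)$.

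The main obstacle is the reverse inclusion $\ker\Phi\subseteq I$. I handle it by grouping paths by their endpoint pair. $\n_Q$ decomposes as $\bigoplus_{(a,b)}\n_Q(a,b)$, the span of paths from $a$ to $b$, and $\Phi$ maps $\n_Q(a,b)$ onto the line $\R(a,b)$. So $x=\sum x_{(a,b)}$ lies in $\ker\Phi$ iff for each pair the sum of the coefficients of the paths in $x_{(a,b)}$ is zero. I then split according to path length.

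\begin{itemize}
\item If $a\times b$ is an arrow of $Q$, then $\n_Q(a,b)$ contains no path of length $\geq 2$. Otherwise some intermediate vertex $c\notin\{a,b\}$ would satisfy $a\prec c\prec b$, contradicting the Hasse condition. In the Hasse quiver of a TAOS there is exactly one arrow from $a$ to $b$, since $Q_1\subseteq P\times P$. So $\n_Q(a,b)$ is one-dimensional and $\Phi$ is injective on it.
\item If $a\times b$ is not an arrow, every path from $a$ to $b$ has length at least two. A zero-coefficient-sum combination $\sum\lambda_i\omega_i$ can be rewritten as $\sum_{i\ge2}\lambda_i(\omega_i-\omega_1)$, which is a combination of generators of $I$.
\end{itemize}

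Hence $\ker\Phi\subseteq I$, and $\bar\Phi$ is the desired isomorphism.

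One detail needs care: $I$ is a two-sided ideal, so I must confirm it is not larger than the span of the generators. This follows from $I\subseteq\ker\Phi$, which we already have, together with $\ker\Phi\subseteq\operatorname{span}\{\text{generators}\}\subseteq I$. These give equality throughout. One could also note directly that multiplying a generator by a path yields another difference of parallel paths of length $\ge 2$.
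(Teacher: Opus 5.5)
The paper states this proposition without proof, so there is no argument of its own to compare against. Your proof is correct and complete. The map you use sends a path, or its class modulo $I$, to the pair (starting vertex, terminal vertex), and this is exactly the identification the paper later invokes without details in the proof of Theorem~\ref{Thm:multiedge}.

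You tacitly assume that concatenation $pq$ requires $t(p)=s(q)$. This matches the product on $R(P)$. Under the opposite convention $\Phi$ becomes an anti-homomorphism, and then $-\Phi$ is the Lie isomorphism, so nothing changes.

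Your kernel computation also shows that $I$ equals the linear span of the generators. Hence reading ``ideal'' as a Lie ideal rather than a two-sided associative ideal gives the same quotient.
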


Throughout this paper, we only consider TAOSs whose longest chain has length at least two, since no nontrivial incidence algebra exists if every chain has length one.

Note that the incidence algebra of $P$ usually contains the basis elements $\{(u,u) \mid u \in P,\ u   \prec     u\}$. However, these elements are not nilpotent, so we only consider the basis $B(P)$ in which every element is nilpotent.

\begin{prop}\label{prop:fd and nil}
Let $P$ be a finite TAOS and let $r$ be the maximal length of chains in $P$. Then $R(P)$ is finite-dimensional and $(r-1)$-step nilpotent.
\end{prop}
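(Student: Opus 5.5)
Finite-dimensionality is immediate: $B(P)\subseteq P\times P$, so $\dim R(P)\le |P|^2<\infty$. The substance is the nilpotency step. I will grade $R(P)$ by "length": a basis element $(s,t)\in B(P)$ has $s\neq t$ and $s\prec t$. Define $\ell(s,t)$ as the maximal cardinality of a chain $C$ with $s$ and $t$ its least and greatest elements, i.e.\ $C=\{s=c_1,\dots,c_m=t\}$ with $c_i\prec c_{i+1}$ and the $c_i$ distinct. Antisymmetry and transitivity make the distinct elements of any chain form a strict linear arrangement. Finiteness of $P$ guarantees $2\le \ell(s,t)\le r$.

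\textbf{Step 1 (upper bound).} I will show by induction on $k$ that $C^k R(P)$ is spanned by basis elements $(s,t)$ with $\ell(s,t)\ge k+2$. For $k=0$ this holds since every basis element has $\ell\ge 2$. For the inductive step, note that a nonzero bracket of basis elements $[(s,t),(u,v)]$ is $\pm(s,v)$ with $t=u$, or $\pm(u,t)$ with $v=s$, or a difference of the two.

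Consider $(s,v)$ arising from $t=u$ with $s\ne v$. Concatenating a maximal chain from $s$ to $t$ with one from $t$ to $v$ gives a chain from $s$ to $v$. Its elements are distinct: if some element $x$ lay in both pieces, we would get $x\prec t\prec x$, forcing $x=t$. So $\ell(s,v)\ge \ell(s,t)+\ell(t,v)-1\ge (k+2)+2-1$.

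The degenerate case $s=v$ needs separate attention. Then $s\prec t\prec s$, so $s=t$ by antisymmetry, which contradicts $(s,t)\in B(P)$. Hence such products never produce the excluded $(u,u)$. Since every basis element has $\ell\le r$, we get $C^{r-1}R(P)=0$.

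\textbf{Step 2 (lower bound).} Pick a chain of maximal length $r\ge 2$; as observed above, its elements can be listed as distinct $c_1\prec c_2\prec\cdots\prec c_r$. Each $(c_i,c_{i+1})$ lies in $B(P)$. The iterated bracket
\[
[\cdots[[(c_1,c_2),(c_2,c_3)],(c_3,c_4)],\dots,(c_{r-1},c_r)]
\]
should equal $(c_1,c_r)\neq 0$. To confirm this, I will check at each stage that the reverse product $(c_{j},c_{j+1})\cdot(c_1,c_j)$ vanishes, which requires $c_{j+1}\neq c_1$. This holds by distinctness. Hence $C^{r-2}R(P)\neq0$, and $R(P)$ is exactly $(r-1)$-step nilpotent.

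The main obstacle is the care needed with non-reflexive and non-irreflexive relations. Specifically, one must verify that chains concatenate without repetition and that brackets never land on the excluded diagonal elements. Both reduce to the antisymmetry argument given in Step 1.
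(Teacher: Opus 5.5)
Your proof is correct and follows essentially the same route as the paper: $B(P)$ is finite; a maximal chain $c_1\prec\cdots\prec c_r$ yields the nonzero element $(c_1,c_r)$ of $C^{r-2}R(P)$; and any nonzero product of basis elements strings together into a chain, which forces $C^{r-1}R(P)=0$. Your version is more careful than the paper's, which argues only with associative products $\prod_i(x_i,x_{i+1})$ and leaves two things implicit: the passage to iterated Lie brackets, and the fact that products never land on an excluded diagonal element $(u,u)$. One small correction: your Step 1 induction shows only that $C^kR(P)$ is \emph{contained in} the span of basis elements with $\ell\ge k+2$, not that it is spanned by them, but containment is all you use.
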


\begin{rem}
For convenience, throughout the remaining proofs, we assume that the binary relation is also irreflexive, i.e., $u \prec v$ implies $u \neq v$. The arguments for the non-irreflexive case are similar.
\end{rem}

\begin{proof}[Proof of Proposition~\ref{prop:fd and nil}]
Since $P$ is finite, $B(P)$ must be finite, so $R(P)$ is finite-dimensional.

Let $r$ be the maximal length of chains in $P$. Denote one of the longest chains by $x_1   \prec     x_2   \prec     \cdots   \prec     x_r$; then $\prod_{i=1}^{r-1} (x_i,x_{i+1}) \neq 0$. Moreover, a nonzero product of $r$ basis elements would imply a chain of length $\geq r+1$. Hence $R(P)$ satisfies $C^{r-2}R(P) \neq 0$ and $C^{r-1}R(P) = 0$, which means $R(P)$ is $(r-1)$-step nilpotent.
\end{proof}

\begin{defi}
Let $P$ be a TAOS. An automorphism of $P$ is a bijective function $f: P \to P$ such that $f$ is order-preserving, i.e., $x   \prec     y$ implies $f(x)   \prec     f(y)$, and $f^{-1}$ is also order-preserving.

The automorphism group of $P$ is denoted by $\Aut(P)$.
\end{defi}

Every $f \in \Aut(P)$ acts on $B(P)$ by $f((s,t)) = (f(s), f(t))$, and hence on $R(P)$.

Let $P$ be a finite TAOS and let $r$ be the maximal length of chains in $P$. Denote all chains of length $r$ by
\[
\begin{array}{ccccc}
x_{11} &  \prec    & x_{12} &  \prec     \cdots   \prec    & x_{1r} \\
x_{21} &  \prec    & x_{22} &  \prec     \cdots   \prec    & x_{2r} \\
\vdots && \vdots && \vdots \\
x_{k1} &  \prec    & x_{k2} &  \prec     \cdots   \prec    & x_{kr},
\end{array}
\]
where $\{x_{1,t}, \dots, x_{k,t}\}$ is a multiset (not necessarily a set), $t = 1, \dots, r$. The underlying set of the multiset $\{x_{11}, \dots, x_{k1}\}$ is denoted by $\{a_i\}_{i \in I}$, and the underlying set of $\{x_{12}, \dots, x_{k2}\}$ is denoted by $\{b_j\}_{j \in J}$.

\begin{prop}
Under the above notation, we have the following properties:

(\rmn1) $a_i$ and $a_j$ are incomparable for distinct $i$, $j \in I$.

(\rmn2) $\{a_i\}_{i \in I} \cap \{b_j\}_{j \in J} = \varnothing$.

(\rmn3) $b_i$ and $b_j$ are incomparable for distinct $i$, $j \in J$.
\end{prop}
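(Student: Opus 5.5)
All three parts follow from one observation: a chain of maximal length $r$ cannot be lengthened. So whenever a hypothetical comparability or coincidence would let us splice in an extra element, we get a chain of length $r+1$, which is impossible. We work under the standing irreflexivity convention, so $s \prec t$ implies $s \neq t$. Antisymmetry together with transitivity then forbids cycles: if $s \prec t \prec s$, transitivity gives $s \prec s$, which is excluded.

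\emph{Part} (\rmn1). Take distinct $i, j \in I$ and suppose, say, $a_i \prec a_j$. Since $a_j$ is the first element of some maximal chain $a_j = x_{l1} \prec x_{l2} \prec \cdots \prec x_{lr}$, transitivity gives $a_i \prec x_{lt}$ for every $t$. So $\{a_i, x_{l1}, \dots, x_{lr}\}$ is a set of pairwise comparable elements. It has $r+1$ distinct elements: $a_i \neq a_j$, and $a_i$ cannot equal any $x_{lt}$ with $t \geq 2$, because $x_{lt} \succ a_j \succ a_i$ would create a cycle. This is a chain of length $r+1$, contradicting the maximality of $r$. The case $a_j \prec a_i$ is symmetric.

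\emph{Part} (\rmn3). This is the dual argument applied to the second entries, using the prefix of a chain instead of its tail. Suppose $b_i \prec b_j$ with $b_j = x_{l2}$ in some maximal chain $l$. Then the elements $x_{l1}, \dots, x_{lr}$ together with $b_i$ are pairwise comparable. Indeed, $b_i \prec x_{lt}$ for $t \geq 2$ by transitivity. Also $b_i \succ x_{m1}$ for the chain $m$ in which $b_i = x_{m2}$. However, we also need $b_i$ to be comparable with $x_{l1}$, and that is not automatic.

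The cleaner route is to use the prefix of chain $m$ rather than of chain $l$: consider
\[
x_{m1} \prec b_i \prec b_j \prec x_{l3} \prec \cdots \prec x_{lr}.
\]
Every link here is a valid relation, so by transitivity all the elements are pairwise comparable. The elements are distinct because the chain is strictly increasing and there are no cycles. The chain has $2 + 1 + (r-2) = r+1$ elements, a contradiction. This same splicing device also gives a cleaner proof of Part (\rmn1): if $a_i \prec a_j$, take $a_i \prec a_j = x_{l1} \prec x_{l2} \prec \cdots \prec x_{lr}$.

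\emph{Part} (\rmn2). Suppose $a_i = b_j$. Then $b_j = x_{m2}$ for some chain $m$, so $x_{m1} \prec a_i$, while $a_i = x_{l1}$ heads a chain of length $r$. Splicing gives
\[
x_{m1} \prec x_{l1} \prec x_{l2} \prec \cdots \prec x_{lr},
\]
a chain of length $r+1$, again a contradiction.

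The only real obstacle is checking distinctness of the spliced elements. This follows from irreflexivity together with transitivity, which rules out cycles. The non-irreflexive case is handled the same way after discarding reflexive pairs, in line with the paper's standing remark.
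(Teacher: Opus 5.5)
Your proposal is correct and follows the paper's proof: every part is a splicing argument that produces a chain of length $r+1$. In particular, your chain $x_{m1} \prec b_i \prec b_j \prec x_{l3} \prec \cdots \prec x_{lr}$ in (iii) is exactly the paper's $a_k \prec b_i \prec b_j \prec \cdots$, and in (ii) you splice directly where the paper obtains $a_k \prec a_i$ and then appeals to (i). You can delete the abandoned first attempt in (iii), which plays no role in the final argument.
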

\begin{proof}
$(\rmn1)$ If $a_i   \prec     a_j$ for some $i$, $j \in I$, we would obtain a chain of length $r+1$, a contradiction.

$(\rmn2)$ If $a_i = b_j$ for some $i \in I$, $j \in J$, then by the definition of $\{b_j\}_{j \in J}$, there must exist $a_k$ with $k \in I$ such that $a_k   \prec     b_j = a_i$, a contradiction.

$(\rmn3)$ If $b_i   \prec     b_j$ for some $i$, $j \in J$, then by the definition of $\{b_j\}_{j \in J}$, there must exist $a_k$ with $k \in I$ such that $a_k   \prec     b_i   \prec     b_j$; hence we obtain a chain of length $r+1$, a contradiction.
\end{proof}

\section{Main theorem}

In this section, we prove the main theorem. Let $P$ be a finite array TAOS. For the nilpotent Lie algebra $R(P)$, we construct an inner product inductively that makes $R(P)$ an algebraic Ricci soliton. The definition of an array TAOS is given later in this section.

\subsection{Nice basis}

In this subsection, we recall the definition of a nice basis and prove that $B(P)$ is a nice basis of $R(P)$.

\begin{defi}[Nikolayevsky (\cite{MR2792974})]\label{nice_def}
Let $\{X_1,\dots,X_n\}$ be a basis of a nilpotent Lie algebra $\n$, with $[X_i,X_j] = \sum_{k=1}^n c_{ij}^k X_k$. Then $\{X_1,\dots,X_n\}$ is said to be {\it nice} if
\begin{itemize}
\item for any $i$ and $j$, there exists at most one $k$ such that $c_{ij}^k \neq 0$,
\item for any $i$ and $k$, there exists at most one $j$ such that $c_{ij}^k \neq 0$.
\end{itemize}
\end{defi}

Nikolayevsky (\cite{MR2792974}) proved that if an orthonormal basis $\{X_1,\dots,X_n\}$ is nice, then the Ricci operator is diagonal with respect to this basis, i.e.,
\[
\langle \Ric(X_i), X_j \rangle = 0,\quad \text{if } i \neq j.
\]
Therefore, by Equation (\ref{Ric_nil}), we obtain the following.

\begin{prop}\label{prop:nice Ricci curvature}
Let $(\n,\langle\cdot,\cdot\rangle)$ be a metric nilpotent Lie algebra, and let $\{X_1,\dots,X_n\}$ be an orthonormal nice basis of $(\n,\langle\cdot,\cdot\rangle)$. Then
\begin{equation}\label{Ric_nice}
\Ric(X_k) = r_k X_k,
\end{equation}
where
$$r_k = -\frac{1}{2}\sum_{i,j}\langle[X_k,X_i],X_j\rangle^2 + \frac{1}{2}\sum_{i <    j}\langle[X_i,X_j],X_k\rangle^2.$$
\end{prop}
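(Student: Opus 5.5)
The plan is to evaluate formula (\ref{Ric_nil}) directly in the orthonormal nice basis. We first show that the off-diagonal entries $\langle \Ric(X_k), X_l\rangle$ vanish for $k\neq l$; this recovers Nikolayevsky's observation, which we could also simply quote. We then compute the diagonal entry $\langle \Ric(X_k),X_k\rangle$. Throughout we use that, since the basis is orthonormal, $\langle [X_i,X_j],X_m\rangle = c_{ij}^m$.

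\emph{Diagonality.} Take $X=X_k$ and $Y=X_l$ with $k\neq l$ in (\ref{Ric_nil}).
\begin{itemize}
\item A summand $c_{ki}^j c_{li}^j$ of the first sum can be nonzero only if both $c_{ik}^j=-c_{ki}^j$ and $c_{il}^j=-c_{li}^j$ are nonzero. For fixed $i$ and $j$, the second condition in Definition~\ref{nice_def} says there is at most one index $m$ with $c_{im}^j\neq 0$. Hence $k=l$, so every summand vanishes.
\item A summand $c_{ij}^k c_{ij}^l$ of the second sum can be nonzero only if both $c_{ij}^k$ and $c_{ij}^l$ are nonzero. By the first condition in Definition~\ref{nice_def}, this forces $k=l$, so again every summand vanishes.
\end{itemize}
Therefore $\langle \Ric(X_k),X_l\rangle=0$ for $k\neq l$, and since the basis is orthonormal, $\Ric(X_k)=r_kX_k$ with $r_k=\langle \Ric(X_k),X_k\rangle$.

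\emph{Diagonal entry.} Setting $X=Y=X_k$ in (\ref{Ric_nil}), the first term becomes $-\frac12\sum_{i,j}\langle [X_k,X_i],X_j\rangle^2$ with no further work. The second term is $\frac14\sum_{i,j}\langle[X_i,X_j],X_k\rangle^2$. The summands with $i=j$ vanish because $[X_i,X_i]=0$. By antisymmetry of the bracket, the summand for $(i,j)$ equals the summand for $(j,i)$. So the sum over ordered pairs is twice the sum over $i<j$, and the second term equals $\frac12\sum_{i<j}\langle [X_i,X_j],X_k\rangle^2$. This gives the stated value of $r_k$.

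There is no real obstacle here. The only point needing care is applying the correct niceness condition to each of the two sums, together with the sign change $c_{ki}^j=-c_{ik}^j$ needed to match the second condition.
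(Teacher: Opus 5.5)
Your proof is correct and follows the paper's route: show that $\Ric$ is diagonal in the orthonormal nice basis, then read off $r_k$ from (\ref{Ric_nil}) with $X=Y=X_k$, using antisymmetry to turn $\frac14\sum_{i,j}$ into $\frac12\sum_{i<j}$. The only difference is that the paper cites Nikolayevsky for the diagonality, whereas you derive it directly from the two niceness conditions, and you do this correctly, including the sign flip $c_{ki}^j=-c_{ik}^j$ needed for the first sum.
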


We now prove that $B(P)$ is a nice basis of $R(P)$. This simplifies later calculations and also indicates that $R(P)$ has a relatively simple structure.

\begin{prop}
Let $P$ be a finite TAOS, and consider $R(P)$ and $B(P)$ as defined above. Then $B(P)$ is a nice basis of $R(P)$.
\end{prop}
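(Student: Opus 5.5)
We compute the bracket of two basis elements directly from the definition of $R(P)$ and then check the two conditions of Definition~\ref{nice_def}. By the standing irreflexivity convention, every basis element is $(s,t)$ with $s \prec t$ and $s \neq t$. For $(s,t),(u,v) \in B(P)$ we have
\[
[(s,t),(u,v)] = \delta_{t,u}(s,v) - \delta_{v,s}(u,t).
\]
Both terms cannot be nonzero at once. If $t=u$ and $v=s$, then $s \prec t = u \prec v = s$, so by transitivity $s \prec t$ and $t \prec s$, and antisymmetry forces $s=t$, contradicting $s \neq t$. Hence the bracket of two basis elements is either $0$ or $\pm$ a single basis element. Here $(s,v)$ is indeed in $B(P)$: transitivity gives $s \prec v$, and $s \neq v$ by the same antisymmetry argument. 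This is exactly the first condition, namely that for each pair $i,j$ there is at most one $k$ with $c_{ij}^k \neq 0$.

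For the second condition, fix $X_i=(s,t)$ and a target $X_k=(a,b)$, and ask which $X_j=(u,v)$ satisfy $c_{ij}^k \neq 0$. By the formula, either $t=u$ and $(s,v)=(a,b)$, or $v=s$ and $(u,t)=(a,b)$. The first case forces $u=t$ and $v=b$, so $X_j=(t,b)$. The second forces $u=a$ and $v=s$, so $X_j=(a,s)$.

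The only genuine work is to rule out both cases occurring for two different $j$. Both cases together would require $a=s$ (from the first) and $b=t$ (from the second). The first case then needs $(t,b)=(t,t)$ to be in $B(P)$, which is excluded because $B(P)$ contains no diagonal elements. So at most one $j$ exists, and the second condition holds.

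Finally, we remark that the non-irreflexive case is handled identically, since $B(P)$ excludes the pairs $(u,u)$ by definition. Antisymmetry is the essential input in both steps, and there is no real obstacle beyond keeping the case distinction clean.
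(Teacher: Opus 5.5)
Your proof is correct and takes the same approach as the paper: compute the bracket of two basis elements directly, then note that for a fixed $X_i$ and target $X_k$ the index $j$ is uniquely determined in each of the two sign cases. You are more careful than the paper, since you explicitly use antisymmetry and the exclusion of diagonal pairs to show that the two terms of the bracket never both survive and that the two sign cases cannot produce two different $j$; the paper leaves both points as ``easy to see.''
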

\begin{proof}
We already know that $B(P)$ is a basis of $R(P)$. It is easy to see that $B(P)$ satisfies the first condition of Definition~\ref{nice_def}.

Now take $(\alpha_1, \beta_1), (\alpha_2, \beta_2) \in B(P)$. For any $(\alpha_3, \beta_3) \in B(P)$, we have $[(\alpha_1, \beta_1), (\alpha_3, \beta_3)] = (\alpha_2, \beta_2)$ if and only if $\alpha_1 = \alpha_2$, $\beta_1 = \alpha_3$, $\beta_3 = \beta_2$; and $[(\alpha_1, \beta_1), (\alpha_3, \beta_3)] = -(\alpha_2, \beta_2)$ if and only if $\alpha_3 = \alpha_2$, $\beta_3 = \alpha_1$, $\beta_1 = \beta_2$. By this observation, $B(P)$ satisfies the second condition of Definition~\ref{nice_def}.
\end{proof}

\subsection{Specific subalgebras}

Let $P$ be a finite TAOS, let $r$ be the maximal length of chains in $P$, and consider all chains of length $r$ as above. We can construct a new TAOS $\bar{P}$ from $P$. Set-theoretically, $\bar{P} = P$, but the binary relation is changed: for $i = 1,2,\dots,k$, the elements $x_{i,1}$ and $x_{i,2}$ are made incomparable in $\bar{P}$, while all other relations remain as in $P$.

\begin{ex}
\[
\begin{tikzcd}
{a_1} &&&&& {a_1} & \\
& b & c &&& b & c \\
{a_2} &&&&& {a_2}
\arrow[from=1-1, to=2-2]
\arrow[from=1-6, to=2-7]
\arrow[from=2-2, to=2-3]
\arrow[from=2-6, to=2-7]
\arrow[from=3-1, to=2-2]
\arrow[from=3-6, to=2-7]
\end{tikzcd}
\]

We denote the TAOS whose Hasse quiver is on the left by $P_1$, and $\bar{P}_1$ is the TAOS whose Hasse quiver is on the right.
\end{ex}

\begin{prop}
Let $P$ be a finite TAOS, let $r$ be the maximal length of chains in $P$, and consider $\bar{P}$ and all chains of length $r$ as above. Then

(\rmn1) $B(\bar{P})$ is a proper subset of $B(P)$,

(\rmn2) $R(\bar{P})$ is an ideal of $R(P)$,

(\rmn3) the maximal length of chains in $\bar{P}$ is $r-1$; moreover, $R(\bar{P})$ is $(r-2)$-step nilpotent.
\end{prop}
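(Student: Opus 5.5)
The plan is to work entirely at the level of the order relations. Throughout, ``removed pair'' means a pair $(x_{i,1},x_{i,2})$ for some $i=1,\dots,k$, i.e.\ the first two elements of some chain of maximal length $r$ in $P$. The order $\prec_{\bar P}$ is $\prec$ with exactly the removed pairs deleted. All three parts rest on one observation:

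\emph{Key observation:} if $(a,b)$ is a removed pair, there is no $t\in P$ with $a\prec t\prec b$.

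Indeed, such a $t$ would give the chain $a\prec t\prec b\prec x_{i,3}\prec\cdots\prec x_{i,r}$ of length $r+1$, which contradicts maximality.

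Before anything else I will check that $\bar P$ really is a TAOS. Antisymmetry is inherited from $P$, since $\bar P$ only has fewer relations. For transitivity, suppose $s\prec_{\bar P}t\prec_{\bar P}u$. Then $s\prec t\prec u$ in $P$, so $s\prec u$ in $P$. If $(s,u)$ were a removed pair, the key observation would be violated with $t$ as the intermediate element. Hence $s\prec_{\bar P}u$.

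For (\rmn1), the inclusion $B(\bar P)\subseteq B(P)$ is immediate, because every relation of $\bar P$ is a relation of $P$. Since $r\ge 2$, the pair $(x_{11},x_{12})$ lies in $B(P)\setminus B(\bar P)$, so the inclusion is proper. In both algebras the product is given by the same rule $(s,t)\cdot(t,v)=(s,v)$. Therefore the inclusion $R(\bar P)\hookrightarrow R(P)$ is a Lie algebra homomorphism, provided products of elements of $B(\bar P)$ land in $B(\bar P)$, and this is exactly the transitivity just proved.

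For (\rmn2), it suffices to check basis elements. Take $(s,t)\in B(\bar P)$ and $(u,v)\in B(P)$. The bracket is a signed combination of the products $(s,t)\cdot(t,v)=(s,v)$ when $u=t$, and $(u,s)\cdot(s,t)=(u,t)$ when $v=s$. In the first case $s\prec t\prec v$ in $P$, so $(s,v)$ cannot be a removed pair by the key observation, and hence $(s,v)\in B(\bar P)$. The second case is symmetric, with $u\prec s\prec t$.

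For (\rmn3), every chain of $\bar P$ is a chain of $P$, so its length is at most $r$. A chain of length $r$ in $\bar P$ would be one of the chains $x_{i,1}\prec\cdots\prec x_{i,r}$. But $x_{i,1}$ and $x_{i,2}$ are incomparable in $\bar P$: they are not related in the removed direction, and $x_{i,2}\prec x_{i,1}$ fails by antisymmetry in $P$. So no such chain survives. On the other hand, $x_{1,2}\prec\cdots\prec x_{1,r}$ involves no removed pair, because a removed pair must start at a minimal-type element $a_i$, and by (\rmn2) of the previous proposition no $a_i$ equals any $b_j$. More directly, a removed pair $(x_{1,j},x_{1,j+1})$ with $j\ge2$ would yield a chain of length exceeding $r$. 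So this is a chain of length $r-1$ in $\bar P$. Proposition~\ref{prop:fd and nil} then gives that $R(\bar P)$ is $(r-2)$-step nilpotent. In the degenerate case $r=2$ this should be read as $R(\bar P)=0$.

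The main obstacle is only the bookkeeping in the key observation and in showing that the $(r-1)$-chain avoids removed pairs. Everything else follows mechanically from the multiplication rule.
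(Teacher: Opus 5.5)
Your proof is correct and follows the same route as the paper. Part (ii) rests on the fact that no bracket of two elements of $B(P)$ can equal $\pm(x_{i1},x_{i2})$; the paper asserts this, and you justify it by noting that each removed pair has no element strictly between its entries, since an intermediate element would lengthen a maximal chain. Part (iii) is the direct check that the paper leaves to ``the construction.'' You also supply details the paper omits: that $\bar{P}$ is again transitive, so $R(\bar{P})$ is well defined and its bracket is the restriction of that of $R(P)$, and how to read (iii) in the degenerate case $r=2$.
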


\begin{proof}
$(\rmn1)$ is trivial.

For $(\rmn2)$, take $(\alpha_1,\beta_1) \in B(\bar{P})$ and $(\alpha_2,\beta_2) \in B(P)$. The bracket $[(\alpha_1,\beta_1), \linebreak[4] (\alpha_2,\beta_2)]$ cannot equal $(x_{i1},x_{i2})$ or $-(x_{i1},x_{i2})$ for any $i$, so $[R(\bar{P}),R(P)] \subseteq R(\bar{P})$, which means $R(\bar{P})$ is an ideal of $R(P)$.

$(\rmn3)$ follows from the construction of $\bar{P}$.
\end{proof}

Define $S$ to be $B(P) \setminus B(\bar{P})$; $S$ is nonempty.

Recall that an automorphism $f \in \Aut(P)$ acts on $B(P)$. We prove some properties of automorphisms of TAOSs regarding $S$ and $B(\bar{P})$.

\begin{prop}\label{prop:aut bijective}
Let $P$ be a finite TAOS, let $r$ be the maximal length of chains in $P$, and consider $S$, $\bar{P}$, and all chains of length $r$ as above. Then every $f \in \Aut(P)$ satisfies

(\rmn1) $f(S) = S$,

(\rmn2) $f(B(\bar{P})) = B(\bar{P})$,

(\rmn3) $f|_{B(\bar{P})} \in \Aut(\bar{P})$.
\end{prop}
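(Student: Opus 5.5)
The plan is to show that an automorphism preserves the set of longest chains, and then read off all three claims. First observe that $f \in \Aut(P)$ maps chains to chains of the same length. Indeed, $f$ is a bijection, and it preserves comparability in both directions because $f$ and $f^{-1}$ are order-preserving. Hence $f$ permutes the set of chains of maximal length $r$. Since $f$ is order-preserving, the image of $x_{i1} \prec x_{i2} \prec \cdots \prec x_{ir}$ is the chain $f(x_{i1}) \prec f(x_{i2}) \prec \cdots \prec f(x_{ir})$, listed in increasing order, which is again of length $r$. Therefore it equals some row $x_{j1} \prec \cdots \prec x_{jr}$ of the list, with $f(x_{it}) = x_{jt}$ for each $t$. (The order of elements in a chain is determined, because $\prec$ is antisymmetric and, under the irreflexivity convention of the Remark, a chain is linearly ordered.)

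For (i), recall that $S = B(P)\setminus B(\bar P) = \{(x_{i1},x_{i2}) \mid i=1,\dots,k\}$. By the previous paragraph, $f((x_{i1},x_{i2})) = (f(x_{i1}),f(x_{i2})) = (x_{j1},x_{j2}) \in S$, so $f(S)\subseteq S$. Since $f$ acts injectively on the finite set $B(P)$, this gives $f(S)=S$. For (ii), $f$ acts as a bijection of $B(P)$ that preserves $S$, so it also preserves the complement $B(P)\setminus S = B(\bar P)$.

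For (iii), we must check that $f$, viewed as a bijection of the underlying set $\bar P = P$, is an automorphism of $\bar P$. The relation of $\bar P$ is exactly the set of pairs in $B(\bar P)$, since we assumed irreflexivity. The statement that $f$ and $f^{-1}$ preserve this relation is precisely (ii) applied to $f$ and to $f^{-1} \in \Aut(P)$. So $f \in \Aut(\bar P)$, and its action on $B(\bar P)$ is the restriction $f|_{B(\bar P)}$; this is the meaning of the notation in (iii).

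The only delicate point is the first step: we need the image of a longest chain to be listed in the same order, so that the first two entries map to the first two entries. This uses order preservation of $f$ together with antisymmetry. In the non-irreflexive case, the pairs $(u,u)$ are simply excluded from $B$, and the same argument applies verbatim.
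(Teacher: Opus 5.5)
Your proof is correct and follows the paper's route. You push a longest chain forward under $f$ to get $(f(x_{i1}),f(x_{i2}))\in S$, pass to complements in $B(P)$ for (ii), and read off (iii) from (ii) applied to $f$ and $f^{-1}$. You also make explicit two points the paper leaves implicit: $f(S)\subseteq S$ becomes equality by injectivity on the finite set $B(P)$, and antisymmetry forces the increasing listing of a chain, so the first two entries go to the first two entries.
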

\begin{proof}
$(\rmn1)$ For each $(x_{i1},x_{i2}) \in S$, there exists a chain $x_{i1}   \prec     x_{i2}   \prec     \cdots   \prec     x_{ir}$ in $P$. Since $f$ is order-preserving, we obtain another chain $f(x_{i1})   \prec     f(x_{i2})   \prec     \cdots   \prec     f(x_{ir})$ in $P$, which means $(f(x_{i1}),f(x_{i2})) \in S$, i.e., $f((x_{i1},x_{i2})) \in S$.

$(\rmn2)$ It is easy to check that $f$ is a bijection on $B(P)$. By $(\rmn1)$, $f(S) = S$, so $f(B(\bar{P})) = B(\bar{P})$.

$(\rmn3)$ By $(\rmn2)$, $f$ is bijective on $\bar{P}$, and both $f$ and $f^{-1}$ are order-preserving; hence $f|_{B(\bar{P})} \in \Aut(\bar{P})$.
\end{proof}

\begin{prop}\label{prop:aut extend}
Let $P$ be a finite TAOS, let $r$ be the maximal length of chains in $P$, and consider $S$, $\bar{P}$, and all chains of length $r$ as above. If for  some $i$  we have $x_{i1}   \prec     \alpha$ in $P$ if and only if $x_{i2}   \prec     \alpha$ in $P$, where $\alpha \neq x_{k1}$ and $\alpha \neq x_{k2}$ for every $k$, then for  $(x_{i1},x_{i2}) \in S$ and  each  $y$ with $(x_{i2}, y) \in B(\bar{P})$, there exists $f \in \Aut(\bar{P})$ such that $f((x_{i2},y)) = (x_{i1},y)$.
\end{prop}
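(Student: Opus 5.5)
The plan is to take $f$ to be the transposition $\sigma$ of $\bar{P}$ that swaps $x_{i1}$ and $x_{i2}$ and fixes every other element. We verify that $\sigma$ and $\sigma^{-1}=\sigma$ are order-preserving on $\bar{P}$. Since $\sigma$ fixes everything else, it suffices to show that $x_{i1}$ and $x_{i2}$ have the same predecessors and the same successors in $\bar{P}$, and that they are incomparable in $\bar{P}$. Once this holds, $\sigma\in\Aut(\bar{P})$. Moreover, any $y$ with $(x_{i2},y)\in B(\bar{P})$ satisfies $y\neq x_{i1},x_{i2}$ (since $x_{i1}$ and $x_{i2}$ are incomparable in $\bar P$, and we assume irreflexivity), so $\sigma((x_{i2},y))=(x_{i1},y)$.

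\emph{Incomparability and predecessors.} In $\bar P$ the pair $(x_{i1},x_{i2})$ has been deleted, and $x_{i2}\prec x_{i1}$ would produce a chain of length $r+1$ in $P$; so the two elements are incomparable. Next, $x_{i1}$ is minimal in $P$, because anything below it would extend a longest chain. If $\beta\prec x_{i2}$ in $P$, then $\beta\prec x_{i2}\prec\cdots\prec x_{ir}$ is a chain of length $r$. Hence $\beta=x_{k1}$ and $x_{i2}=x_{k2}$ for some $k$, and this relation is removed in $\bar P$. So both elements are minimal in $\bar P$. Relations among elements that do not involve such pairs are unchanged from $P$ to $\bar P$.

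\emph{Successors.} Let $\alpha$ be an element. If $\alpha$ is not of the form $x_{k1}$ or $x_{k2}$, the hypothesis gives $x_{i1}\prec\alpha\iff x_{i2}\prec\alpha$ in $P$, and these relations survive unchanged in $\bar P$.

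If $\alpha=x_{k1}$, then $\alpha$ is minimal in $P$, so neither $x_{i1}$ nor $x_{i2}$ lies below it.

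If $\alpha=x_{k2}$, consider first $x_{i2}\prec x_{k2}$. This is excluded by part (iii) of the proposition on $\{a_i\}$ and $\{b_j\}$ when $x_{k2}\neq x_{i2}$, and by irreflexivity when $x_{k2}=x_{i2}$. Consider next $x_{i1}\prec x_{k2}$ in $P$. This gives a chain $x_{i1}\prec x_{k2}\prec x_{k3}\prec\cdots\prec x_{kr}$ of length $r$, so $(x_{i1},x_{k2})$ is one of the removed pairs and the relation fails in $\bar P$. Part (ii) of that proposition ensures $x_{i2}$ is never some $x_{k1}$, so no case is overlooked.

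The main obstacle is this bookkeeping of the pairs removed in passing from $P$ to $\bar{P}$. Specifically, one must check that every relation $a\prec b$ with $a$ a starting point and $b$ a second element of some longest chain is indeed deleted, which uses that any such relation extends to a chain of length $r$. The rest is a direct verification.
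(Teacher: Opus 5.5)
Your proof is correct and uses the same map as the paper: the transposition of $x_{i1}$ and $x_{i2}$, shown to be order-preserving on $\bar P$ via the exchange condition. Your verification is more complete than the paper's, which checks only successors and asserts without justification that $y \neq x_{k1},x_{k2}$, whereas you also show that both elements are minimal and incomparable in $\bar P$ and that $y \neq x_{i1}$; one small fix is that $x_{i2}\prec x_{i1}$ is ruled out directly by antisymmetry, not by producing a chain of length $r+1$.
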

\begin{proof}
Define a map $f: \bar{P} \to \bar{P}$ by
\[
f(x_{i2}) = x_{i1},\qquad f(x_{i1}) = x_{i2},\qquad f|_{\bar{P}\setminus\{x_{i1},x_{i2}\}} = \id.
\]

We show that $f \in \Aut(\bar{P})$. Clearly $f$ and $f^{-1}$ are bijective. If $x_{i1}   \prec     y$ in $\bar{P}$, then $x_{i1}   \prec     y$ in $P$ and $y \neq x_{k1},x_{k2}$ for any $k$. Since $x_{i1}   \prec     \alpha$ in $P$ if and only if $x_{i2}   \prec     \alpha$ in $P$ for $\alpha \neq x_{k1},x_{k2}$, we have $x_{i2}   \prec     y$ in $P$, hence in $\bar{P}$, i.e., $f(x_{i1})   \prec     f(y)$ in $\bar{P}$. If $x_{i2}   \prec     y$ in $\bar{P}$, then $x_{i1}   \prec     y$ in $\bar{P}$, i.e., $f(x_{i2})   \prec     f(y)$ in $\bar{P}$. Thus $f$ is order-preserving, and similarly $f^{-1}$ is order-preserving. Hence $f \in \Aut(\bar{P})$.
\end{proof}

If for each $(x_{i1},x_{i2}) \in S$ we have $x_{i1}   \prec     \alpha$ in $P$ if and only if $x_{i2}   \prec     \alpha$ in $P$, where $\alpha \neq x_{k1},x_{k2}$ for every $k$, then we say that the first column and the second column of $P$ satisfy the exchange condition.

\begin{prop}\label{prop:derivation extend}
Let $P$ be a finite TAOS, $r$ be the maximal length of chains in $P$, and consider $S$, $\bar{P}$, and all chains of length $r$ as above. Assume that the first and second columns of $P$ satisfy the exchange condition, and $D \in \Der(R(\bar{P}))$ satisfies:
\begin{itemize}
\item $D$ is diagonal with respect to the basis $B(\bar{P})$,
\item $D \circ f = f \circ D$ holds for every $f \in \Aut(\bar{P})$.
\end{itemize}
Then the extension $\hat{D}$ of $D$ defined by $\hat{D}|_S = 0$ is a derivation of $R(P)$.
\end{prop}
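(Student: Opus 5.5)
The plan is to verify the derivation identity
\[
\hat{D}([X,Y])=[\hat{D}X,Y]+[X,\hat{D}Y]
\]
on all pairs of basis vectors $X,Y\in B(P)=B(\bar{P})\sqcup S$. Write $D(u,v)=d_{(u,v)}(u,v)$ for $(u,v)\in B(\bar{P})$. There are three cases, according to how many of $X,Y$ lie in $S$. Throughout I use the earlier proposition on the columns $\{a_i\}$ and $\{b_j\}$: elements within a column are pairwise incomparable, the two columns are disjoint, and nothing lies strictly below an element $x_{i1}$, since otherwise there would be a chain of length $r+1$.

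\emph{Both $X,Y\in B(\bar{P})$.} First I check that the product in $R(P)$ of two elements of $B(\bar{P})$ agrees with the product in $R(\bar{P})$. Suppose $(s,t)\cdot(t,v)=(s,v)$ with $(s,v)\in S$, so $s=x_{i1}$ and $v=x_{i2}$. Then $x_{i1}\prec t\prec x_{i2}\prec\cdots\prec x_{ir}$ is a chain of length $r+1$, which is impossible. Hence $R(\bar{P})$ is a subalgebra with the same bracket, and the identity holds because $D\in\Der(R(\bar{P}))$.

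\emph{Both $X,Y\in S$.} Take $X=(x_{i1},x_{i2})$ and $Y=(x_{j1},x_{j2})$. A nonzero product would require $x_{i2}=x_{j1}$ or $x_{j2}=x_{i1}$. This contradicts the disjointness $\{a_i\}\cap\{b_j\}=\varnothing$. So $[X,Y]=0$, and both sides of the identity vanish because $\hat{D}|_S=0$.

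\emph{Mixed case: $X=(x_{i1},x_{i2})\in S$ and $Y\in B(\bar{P})$.} Since $\hat{D}X=0$, the identity reduces to
\[
\hat{D}[X,Y]=d_Y[X,Y].
\]
Suppose the bracket is nonzero. Either $Y=(w,x_{i1})$, which is impossible because nothing lies below $x_{i1}$, or $Y=(x_{i2},y)$, in which case $[X,Y]=(x_{i1},y)$. In the latter case I show that $y\neq x_{k1}$ and $y\neq x_{k2}$ for every $k$:
\begin{itemize}
\item $x_{i2}\prec x_{k2}$ would contradict the incomparability of the $b_j$;
\item $x_{i2}\prec x_{k1}$ would put an element below $x_{k1}$.
\end{itemize}
Consequently $(x_{i1},y)\notin S$, so $(x_{i1},y)\in B(\bar{P})$ and $\hat{D}(x_{i1},y)=d_{(x_{i1},y)}(x_{i1},y)$. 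What remains is to prove
\[
d_{(x_{i1},y)}=d_{(x_{i2},y)}.
\]

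This equality is the key step, and it is where the exchange condition enters. The exchange condition is exactly the hypothesis of Proposition~\ref{prop:aut extend}. That proposition gives $f\in\Aut(\bar{P})$, namely the transposition of $x_{i1}$ and $x_{i2}$, with $f((x_{i2},y))=(x_{i1},y)$. Applying the hypothesis $D\circ f=f\circ D$ to $(x_{i2},y)$ gives
\[
d_{(x_{i2},y)}(x_{i1},y)=D f(x_{i2},y)=f D(x_{i2},y)=d_{(x_{i1},y)}(x_{i1},y),
\]
so the two eigenvalues coincide. The case $[Y,X]$ follows by antisymmetry.

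The main obstacle I anticipate is the bookkeeping in the mixed case: ruling out every way a bracket with an element of $S$ could land outside $B(\bar{P})$, and confirming that $y$ avoids all $x_{k1},x_{k2}$, so that Proposition~\ref{prop:aut extend} genuinely applies. Once that is in place, the equivariance of $D$ closes the argument.
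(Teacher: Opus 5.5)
Your proof is correct and follows the paper's argument essentially step for step. You use the same three-case split, and in the mixed case you reduce to $d_{(x_{i1},y)}=d_{(x_{i2},y)}$ via the transposition automorphism from Proposition~\ref{prop:aut extend} and $D\circ f=f\circ D$; you also verify directly what the paper takes from $R(\bar{P})$ being an ideal. One cosmetic slip: the outer terms of your final display are swapped, since $Df(x_{i2},y)=d_{(x_{i1},y)}(x_{i1},y)$ and $fD(x_{i2},y)=d_{(x_{i2},y)}(x_{i1},y)$, but the conclusion is unaffected.
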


\begin{proof}
Take $A_1 \coloneqq (\alpha_1,\beta_1)$, $A_2 \coloneqq (\alpha_2,\beta_2) \in B(P)$. We need to show
$$\hat{D}[A_1,A_2] = [\hat{D}A_1, A_2] + [A_1, \hat{D}A_2].$$
If $A_1, A_2 \in B(\bar{P})$, since $D$ is a derivation on $R(\bar{P})$, the claim follows. If $A_1, A_2 \in S$, then $[A_1,A_2] = 0$, so both sides are zero. For $A_1 \in S$ and $A_2 \in B(\bar{P})$, we have $[A_1,A_2] \in R(\bar{P})$, so $\hat{D}[A_1,A_2] = D[A_1,A_2]$. Thus
$$[\hat{D}A_1, A_2] + [A_1, \hat{D}A_2] = [A_1, D A_2],$$
since $\hat{D}A_1 = 0$ and $\hat{D}A_2 = D A_2$.

Since $D$ is diagonal with respect to $B(\bar{P})$, write $D A_2 = k A_2$ with $k \in \R$. If $[A_1,A_2] = 0$, the claim follows. If $[A_1,A_2] \neq 0$, then $\alpha_2 = \beta_1$ and $[A_1,A_2] = A_1 A_2 = (\alpha_1,\beta_2)$. Since the exchange condition holds, by Proposition~\ref{prop:aut extend}, there exists $f \in \Aut(\bar{P})$ such that $f((\alpha_2,\beta_2)) = (\alpha_1,\beta_2)$. Hence
$$D[A_1,A_2] = D((\alpha_1,\beta_2)) = D \circ f((\alpha_2,\beta_2)) = f \circ D((\alpha_2,\beta_2)) = f \circ D A_2.$$
Then $f \circ D A_2 = f(k A_2) = k f(A_2) = k f((\alpha_2,\beta_2)) = k(\alpha_1,\beta_2)$. Moreover, $[A_1, D A_2] = k[A_1,A_2] = k(\alpha_1,\beta_2)$. Thus the claim follows for this case as well.

Therefore, in all cases we have $\hat{D}[A_1,A_2] = [\hat{D}A_1, A_2] + [A_1, \hat{D}A_2]$, completing the proof.
\end{proof}

\subsection{Ricci curvature}

Let $P$ be a finite TAOS, and consider $R(P)$ and $B(P)$ as defined above. Let $\langle\cdot,\cdot\rangle$ be an inner product on $R(P)$ that makes $B(P)$ orthogonal. Denote $\overline{(\alpha, \beta)} \coloneqq (\alpha, \beta) / \left|(\alpha, \beta)\right|$; then $\{ \overline{(\alpha, \beta)} \mid (\alpha, \beta) \in B(P) \}$ is an orthonormal basis and, moreover, is also nice. Thus, by Proposition~\ref{prop:nice Ricci curvature}, we obtain

\begin{align}\label{TAOS Ricci curvature}
\Ric(\overline{(y_1,y_2)}) &=
\Bigl(-\frac{1}{2}\sum_{z  \prec    y_1} \langle [\overline{(z,y_1)},\overline{(y_1,y_2)}], \overline{(z,y_2)} \rangle^2 \nonumber \\
&\qquad -\frac{1}{2}\sum_{z  \succ    y_2} \langle [\overline{(y_1,y_2)},\overline{(y_2,z)}], \overline{(y_1,z)} \rangle^2 \\
&\qquad +\frac{1}{2}\sum_{y_1  \prec    z  \prec    y_2} \langle [\overline{(y_1,z)},\overline{(z,y_2)}], \overline{(y_1,y_2)} \rangle^2 \Bigr) \overline{(y_1,y_2)}. \nonumber
\end{align}

We keep the notation $R(P)$, $B(P)$, $S$, $\bar{P}$, $\{a_i\}_{i \in I}$, and $\{b_j\}_{j \in J}$ as defined above, and give an expression for $\Ric_{R(P)}(\overline{(\alpha, \beta)})$ in terms of the Ricci curvature $\Ric_{R(\bar{P})}$ of $R(\bar{P})$ with respect to $\langle\cdot,\cdot\rangle|_{R(\bar{P}) \times R(\bar{P})}$.

\begin{thm}\label{Thm:Ricci curvature}
Under the above notation, we have the following formulae.

(\rmn1) For each $(a_i,b_j) \in S$,
$$\Ric_{R(P)}(\overline{(a_i,b_j)}) = -\frac{1}{2}\sum_{\{y\, \mid \, y \succ b_j  \}} \langle [\overline{(a_i,b_j)},\overline{(b_j,y)}], \overline{(a_i,y)} \rangle^2 \; \overline{(a_i,b_j)}.$$

(\rmn2) For each $(a_i,y) \in B(P)$ with $y \neq b_j$ for all $j$,
\begin{align*}
\Ric_{R(P)}(\overline{(a_i,y)}) &= \Ric_{R(\bar{P})}(\overline{(a_i,y)}) \\
&\quad + \frac{1}{2} \sum_{\substack{%k \\ a_i   \prec     b_k   \prec     y}
\{k\, \mid \, a_i   \prec     b_k   \prec     y\}}} \langle [\overline{(a_i,b_k)},\overline{(b_k,y)}], \overline{(a_i,y)} \rangle^2 \; \overline{(a_i,y)}.
\end{align*}

(\rmn3) For each $(b_j,y) \in B(P)$,
\begin{align*}
\Ric_{R(P)}(\overline{(b_j,y)}) &= \Ric_{R(\bar{P})}(\overline{(b_j,y)}) \\
&\quad -\frac{1}{2} \sum_{%\substack{k \\ a_k   \prec     b_j   \prec     y}
\{k\, \mid \, a_k   \prec     b_j   \prec     y\}} \langle [\overline{(a_k,b_j)},\overline{(b_j,y)}], \overline{(a_k,y)} \rangle^2 \; \overline{(b_j,y)}.
\end{align*}

(\rmn4) For each $(y_1,y_2) \in B(P)$ with $y_1, y_2 \neq a_i, b_j$ for any $i, j$,
$$\Ric_{R(P)}(\overline{(y_1,y_2)}) = \Ric_{R(\bar{P})}(\overline{(y_1,y_2)}).$$
\end{thm}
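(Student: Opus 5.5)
The plan is to compare the explicit formula (\ref{TAOS Ricci curvature}) for $\Ric_{R(P)}$ with the same formula written for $\bar{P}$. Since $B(\bar{P}) = B(P)\setminus S$ and the inner product on $R(\bar{P})$ is the restriction, for any basis element both Ricci eigenvalues are sums over the same three kinds of terms. The difference consists exactly of the terms in which one of the three basis vectors lies in $S$. Here $S=\{(x_{k1},x_{k2})\}$, and these are precisely the pairs $(a_i,b_j)$ that occur as the first two entries of a chain of length $r$. So for each $(y_1,y_2)\in B(P)$ I will list the terms of (\ref{TAOS Ricci curvature}) involving an element of $S$. The facts used are the proposition on $\{a_i\}$ and $\{b_j\}$ (the $a$'s are pairwise incomparable, the $b$'s are pairwise incomparable, and the two sets are disjoint), together with maximality of $r$. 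Maximality gives: nothing lies strictly below any $a_i$, and the only elements strictly below $b_j$ that are predecessors in a longest chain are the $a$'s.

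For case (i), take $(a_i,b_j)\in S$. The first sum needs $z\prec a_i$, which is impossible, since it would lengthen a chain of length $r$. The third sum needs $a_i\prec z\prec b_j$, which is also impossible, since it would give a chain of length $r+1$. Only the second sum survives, which is exactly formula (i). Here one must also note that $(a_i,y)$ and $(b_j,y)$ lie in $B(P)$.

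For cases (ii)--(iv), the basis element is not in $S$, so we compare with $\bar{P}$ term by term. In case (ii), with $(a_i,y)$ and $y\ne b_j$: the first sum is empty in both $P$ and $\bar P$. In the second sum ($z\succ y$), none of the vectors $(a_i,y),(y,z),(a_i,z)$ lies in $S$, because $y,z$ are not of the form $b_j$ ($z\succ y\succ a_i$ with $z=b_j$ would contradict maximality). In the third sum, the intermediate $z$ with $a_i\prec z\prec y$ contributes a vector $(a_i,z)\in S$ exactly when $z=b_k$, and these are the terms absent in $\bar{P}$. I need to check that when $a_i\prec b_k$, the pair $(a_i,b_k)$ is indeed in $S$, that is, it extends to a chain of length $r$. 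Since $b_k$ is the second entry of some longest chain $x_{m1}\prec b_k\prec\cdots$, replacing $x_{m1}$ by $a_i$ gives a chain of length $r$. This produces the added positive sum.

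In case (iii), with $(b_j,y)$: the first sum, over $z\prec b_j$, loses exactly the terms with $(z,b_j)\in S$, namely $z=a_k$ with $a_k\prec b_j$, by the same replacement argument. The other sums contain no element of $S$, since their entries are $b_j$ or lie strictly above $b_j$. This gives the negative correction. In case (iv), no $a$ or $b$ appears as an endpoint, so an $S$-term could only come from a $z$ equal to some $a_i$ in the first sum with $y_1=b_j$ (excluded), or in the third sum from $y_1=a_i$ (excluded). Hence all terms coincide.

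The main obstacle is the bookkeeping in (ii) and (iii): I must justify that every pair $(a_i,b_k)$ with $a_i\prec b_k$ belongs to $S$ (the chain-replacement argument), and that no other term involves $S$. Both come down to maximality of $r$.
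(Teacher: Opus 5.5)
Your proposal is correct and follows essentially the same route as the paper: a term-by-term comparison of formula (\ref{TAOS Ricci curvature}) for $P$ and for $\bar{P}$, isolating the terms in which one of the three basis vectors lies in $S$. Your chain-replacement argument, showing that every comparable pair $a_i \prec b_k$ actually lies in $S$, makes explicit a step the paper leaves implicit, and so does your use of the fact that every element of $S$ has first entry an $a$ and second entry a $b$.
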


\begin{proof}
We prove these formulae using (\ref{TAOS Ricci curvature}).

For $(\rmn1)$, there is no $\alpha \in P$ with $\alpha   \prec     a_i$ nor with $a_i   \prec     \alpha   \prec     b_j$. Hence the first and third sums in (\ref{TAOS Ricci curvature}) vanish, yielding the formula.

For $(\rmn2)$, there is no $\alpha \in P$ with $\alpha   \prec     a_i$, so the first sum in (\ref{TAOS Ricci curvature}) vanishes. The second sum is already contained in $\Ric_{R(\bar{P})}(\overline{(a_i,y)})$. A term in the third sum is not contained in $\Ric_{R(\bar{P})}(\overline{(a_i,y)})$ only if $z = b_k$ for some $k$. Hence the formula follows.

For $(\rmn3)$, the second and third sums in (\ref{TAOS Ricci curvature}) are contained in $\Ric_{R(\bar{P})}(\overline{(b_j,y)})$, and $z   \prec     b_j$ occurs only when $z = a_k$ for some $k$.

For $(\rmn4)$, all sums in (\ref{TAOS Ricci curvature}) are already contained in $\Ric_{R(\bar{P})}(\overline{(y_1,y_2)})$.
\end{proof}

The set of all $(a_i,y) \in B(P)$ with $y \neq b_j$ for all $j$ is denoted $B_1$; the set of all $(b_j,y) \in B(P)$ is denoted $B_2$; the set of all $(y_1,y_2) \in B(P)$ with $y_1, y_2 \neq a_i, b_j$ for any $i, j$ is denoted $B_3$. Note that $B(P) = S \sqcup B_1 \sqcup B_2 \sqcup B_3$.

\subsection{Proof of the main theorem}

In this subsection, we give the definition of an array TAOS and prove our main theorem.

\begin{defi}
Given $k \in \N$ where $k \geq 2$, $n_1, \dots, n_k \in \N$ where $n_i \geq 1$, $i=1,\dots,k$, $m_1,\dots,m_k \in \N$ where $1 \leq m_i \leq n_i$, $i=1,\dots,k$, and $m_1 = n_1$, $m_k = n_k$, we define the array TAOS $P$ with respect to these data as follows: let $P$ be a set with $\sum_{i=1}^k n_i$ elements, partitioned into $k$ columns where the $i$-th column has cardinality $n_i$, $i=1,\dots,k$. For the $i$-th and $(i+1)$-th columns, choose $m_{i+1}$ elements from the $(i+1)$-th column, and assume that each element of the $i$-th column, denoted by $\eta_i$, and each of the $m_{i+1}$ elements, denoted by $\eta_{i+1}$, satisfy $\eta_i   \prec     \eta_{i+1}$, $i=1,\dots,k-1$. By transitivity, we obtain the required relation on $P$.
\end{defi}

\begin{ex}
\[\begin{tikzcd}
\bullet & \bullet & \bullet & \bullet && \bullet & \bullet & \bullet \\
\bullet & \bullet & \bullet &&& \bullet & \bullet \\
\bullet & \bullet &&&& \bullet \\
& \bullet
\arrow[from=1-1, to=1-2]
\arrow[from=1-1, to=2-2]
\arrow[from=1-2, to=1-3]
\arrow[from=1-3, to=1-4]
\arrow[from=1-6, to=1-7]
\arrow[from=1-7, to=1-8]
\arrow[from=2-1, to=1-2]
\arrow[from=2-1, to=2-2]
\arrow[from=2-2, to=1-3]
\arrow[from=2-3, to=1-4]
\arrow[from=2-6, to=1-7]
\arrow[from=2-7, to=1-8]
\arrow[from=3-1, to=1-2]
\arrow[from=3-1, to=2-2]
\arrow[from=3-2, to=1-3]
\arrow[from=3-6, to=1-7]
\arrow[from=4-2, to=1-3]
\end{tikzcd}\]
The Hasse quiver of the array TAOS with parameters $n=(3,4,2,1)$, $m=(3,2,1,1)$ is the quiver on the left, and the Hasse quiver of the array TAOS with parameters $n=(3,2,1)$, $m=(3,1,1)$ is the quiver on the right.
\end{ex}

We can check that for an array TAOS $P$, the first and second columns of $P$ satisfy the exchange condition. Moreover, $\bar{P}$ is still an array TAOS, and its first and second columns also satisfy the exchange condition. Hence we can use the propositions in the previous subsections in the inductive proof of the following main theorem.

\begin{thm}\label{Thm:Ricci soliton}
Let $P$ be a finite array TAOS, and let $R(P)$ be the nilpotent Lie algebra obtained from $P$. Then there exists an inner product $\langle\cdot,\cdot\rangle$ on $R(P)$ satisfying:

(\rmn1) $\langle\cdot,\cdot\rangle$ is an algebraic Ricci soliton with $\Ric = -\id + D$, where $D \in \Der(R(P))$ is diagonal with respect to $B(P)$,

(\rmn2) $B(P)$ is orthogonal with respect to $\langle\cdot,\cdot\rangle$,

(\rmn3) $\langle\cdot,\cdot\rangle$ is preserved by $\Aut(P)$.
\end{thm}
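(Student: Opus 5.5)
We argue by induction on the maximal chain length $r$ of the array TAOS $P$, i.e.\ on the number $k$ of columns.

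\textbf{Base case.} If $r=2$, $R(P)$ is abelian: every basis element $(a_i,b_j)$ has vanishing bracket with everything. Any inner product making $B(P)$ orthonormal gives $\Ric=0=-\id+\id$, and $\id$ is a derivation of an abelian algebra, diagonal on $B(P)$. Orthonormality of $B(P)$ is preserved by $\Aut(P)$, since automorphisms permute $B(P)$.

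\textbf{Inductive step, setup.} For $r\ge 3$, $\bar P$ is again an array TAOS with maximal chain length $r-1$, and its first two columns satisfy the exchange condition. By induction we get an inner product $\langle\cdot,\cdot\rangle_0$ on $R(\bar P)$ with $\Ric_{R(\bar P)}=-\id+D_0$, with $D_0$ diagonal on $B(\bar P)$, $B(\bar P)$ orthogonal, and $\langle\cdot,\cdot\rangle_0$ invariant under $\Aut(\bar P)$.

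\textbf{Commutation of $D_0$ with automorphisms.} We first check that $D_0\circ f=f\circ D_0$ for every $f\in\Aut(\bar P)$. Since $f$ is an isometry of $(R(\bar P),\langle\cdot,\cdot\rangle_0)$ and a Lie algebra automorphism, it commutes with $\Ric_{R(\bar P)}$, hence with $D_0=\Ric+\id$. By Proposition~\ref{prop:derivation extend}, the extension $\hat D_0$ of $D_0$ by zero on $S$ is then a derivation of $R(P)$.

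\textbf{Definition of the new inner product.} On $R(P)$ we keep $\langle\cdot,\cdot\rangle_0$ on $R(\bar P)$, make $S$ orthogonal to it, and give each $(a_i,b_j)\in S$ a common length $\lambda>0$. This common value is $\Aut(P)$-invariant, because $\Aut(P)$ preserves $S$ and, by Proposition~\ref{prop:aut bijective}, restricts to $\Aut(\bar P)$ on $B(\bar P)$. We may also rescale the old part by a constant.

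In an array TAOS every $a_i$ lies below every $b_j$ with $b_j$ among the chosen $m_2$ elements, and every such $b_j$ has the same successors. Combined with $\Aut(\bar P)$-invariance, this makes the correction sums in Theorem~\ref{Thm:Ricci curvature} depend only on $\lambda$ and on the $\bar P$-norms. For example, the term in (ii) is
$$\sum_{a_i\prec b_k\prec y}|(a_i,b_k)|^2|(b_k,y)|^2/|(a_i,y)|^2.$$

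\textbf{Solving for the derivation.} We seek $\Ric_{R(P)}=c\,\id+\hat D_0+E$, with $E$ a further diagonal derivation, and then rescale so that $c=-1$. Natural candidates for $E$ are the grading derivations $(s,t)\mapsto(\phi(t)-\phi(s))(s,t)$, where $\phi$ is any function on $P$, for instance a function of the column. Using Theorem~\ref{Thm:Ricci curvature}:
\begin{itemize}
\item On $B_3$, the Ricci eigenvalue is unchanged.
\item On $B_2$, it decreases by a constant $\mu$ proportional to $\lambda^2$.
\item On $B_1$, it increases by the same kind of constant.
\item On $S$, it equals $-\tfrac12\sum_y\cdots$.
\end{itemize}
The shifts on $B_1$ and $B_2$ are exactly of the form $\phi(a)-\phi(b)$ with $\phi$ supported on the first two columns. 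So we choose $\phi(a_i)=-\mu$ and $\phi(b_j)=0$ (or adjusted), which accounts for $B_1$, $B_2$ and $B_3$ simultaneously.

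\textbf{Main obstacle.} The remaining difficulty is to match the $S$-eigenvalue. It must equal $-1+0+(\phi(b)-\phi(a))$, which gives one scalar equation in $\lambda$. This eigenvalue grows like $-\lambda^2$ times a positive quantity, while the required value depends on $\mu\propto\lambda^2$ as well. So we must verify that the resulting equation has a positive solution, using the fact that all terms are homogeneous quadratics in $\lambda$. If the coefficients do not work out directly, we first try an extra rescaling of the old metric, together with adding multiples of the standard grading derivation $(s,t)\mapsto(\mathrm{col}(t)-\mathrm{col}(s))(s,t)$, to gain a free parameter. Invariance under $\Aut(P)$ of the final metric follows from the choices above.
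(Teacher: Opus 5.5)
\textbf{Verdict: there is a genuine gap.} Your architecture is the paper's: induct via $\bar P$, extend $D_0$ by zero on $S$ using Proposition~\ref{prop:derivation extend}, give all of $S$ one common length $\lambda$, and use $\Aut(\bar P)$-invariance with Proposition~\ref{prop:aut extend} to get $|(a_i,y)|=|(b_k,y)|$. Writing the residual diagonal operator as a grading derivation $(s,t)\mapsto(\phi(t)-\phi(s))(s,t)$ is a nice alternative to the paper's hand check of cases (1)--(3) for its operator $A$.

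However, your argument stops at the step that carries the content. You flag the matching of the $S$-eigenvalue as the main obstacle and leave it open, with only speculative fallbacks. The computation leading up to it is also wrong. The squared structure constant is
\[
\langle[\overline{(a_i,b_k)},\overline{(b_k,y)}],\overline{(a_i,y)}\rangle^2=\frac{|(a_i,y)|^2}{|(a_i,b_k)|^2\,|(b_k,y)|^2}=\frac{1}{\lambda^2},
\]
which is the reciprocal of what you wrote. So every correction scales like $\lambda^{-2}$, not $\lambda^{2}$. Your remark that \emph{all terms are homogeneous quadratics in $\lambda$} is also off: the right-hand side contains the constant $-1$, and that constant is exactly what makes the equation solvable. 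Finally, $\phi(b_j)=0$ cannot work, because the $B_2$ shift is nonzero and must come from $-\phi(b_j)$.

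What is missing is the following count and choice. In an array TAOS:
\begin{itemize}
\item the number of $y\succ b_j$ is $\sum_{t=3}^r m_t$;
\item the number of $k$ with $a_i\prec b_k\prec y$ is $m_2$;
\item the number of $k$ with $a_k\prec b_j\prec y$ is $m_1$.
\end{itemize}
Hence, by Theorem~\ref{Thm:Ricci curvature}, the eigenvalue on $S$ is $-\frac{1}{2\lambda^2}\sum_{t\ge 3}m_t$. The shift on $B_1$ is $+\frac{m_2}{2\lambda^2}$, and the shift on $B_2$ is $-\frac{m_1}{2\lambda^2}$.

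Take $\phi(a_i)=-\frac{m_2}{2\lambda^2}$, $\phi(b_j)=\frac{m_1}{2\lambda^2}$, and $\phi=0$ elsewhere. This gives the correct values on $B_1$, $B_2$ and $B_3$, including pairs that start at unchosen elements of column $2$. The $S$-condition then reads
\[
-\frac{1}{2\lambda^2}\sum_{t=3}^r m_t=-1+\frac{m_1+m_2}{2\lambda^2},
\qquad\text{i.e.}\qquad
\lambda^2=\frac12\sum_{t=1}^r m_t .
\]
This is exactly the paper's choice of $\langle (x_{i1},x_{i2}),(x_{i1},x_{i2})\rangle$. No rescaling of the old metric and no extra grading derivation is needed.

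With this filled in, your route is correct and slightly cleaner than the paper's, since your $E$ is automatically a derivation and needs no case check.
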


\begin{proof}
Let $l$ be the maximal length of chains in $P$. We prove the theorem by induction on $l$. If $l = 2$, then $R(P)$ is $1$-step nilpotent, hence abelian. Let $\langle\cdot,\cdot\rangle_1$ be an inner product on $R(P)$ that makes $B(P)$ orthonormal. Then $(R(P), \langle\cdot,\cdot\rangle_1)$ satisfies $(\rmn2)$ and $(\rmn3)$. Since $R(P)$ is abelian, $\Ric = 0 = -\id + \id$, where $\id$ is a derivation of $R(P)$; hence $(\rmn1)$ holds.

Assume the assertion holds for $l = r-1$ with $r \geq 3$, and prove it for $l = r$. Let $R(P)$ be the nilpotent Lie algebra obtained from an array TAOS $P$ whose maximal chain length is $r$, and assume the $i$-th column of $P$ has $n_i$ elements of which $m_i$ are chosen, $i=1,\dots,r$.

By the induction hypothesis, there exists an inner product $\langle\cdot,\cdot\rangle'$ on $R(\bar{P})$ satisfying the three conditions. We define an inner product $\langle\cdot,\cdot\rangle$ on $R(P)$ as follows:

\begin{itemize}
\item $\langle\cdot,\cdot\rangle|_{R(\bar{P}) \times R(\bar{P})} = \langle\cdot,\cdot\rangle'$,
\item $B(P)$ is orthogonal with respect to $\langle\cdot,\cdot\rangle$,
\item $\langle (x_{i1},x_{i2}), (x_{i1},x_{i2}) \rangle = \dfrac{\sum_{t=1}^r m_t}{2}$ for any $i$.
\end{itemize}

Since $\langle (x_{i1},x_{i2}), (x_{i1},x_{i2}) \rangle   >     0$ for each $i$, this defines a genuine inner product. By definition, $(\rmn2)$ is satisfied.

Take any $f \in \Aut(P)$ and $(\alpha_1,\beta_1) \in B(P)$. If $(\alpha_1,\beta_1) \in B(\bar{P})$, then by Proposition~\ref{prop:aut bijective}, $f|_{B(\bar{P})} \in \Aut(\bar{P})$, and by the induction hypothesis,
\begin{align*}
\langle (\alpha_1,\beta_1), (\alpha_1,\beta_1) \rangle
&= \langle (\alpha_1,\beta_1), (\alpha_1,\beta_1) \rangle' \\
&= \langle f|_{B(\bar{P})}((\alpha_1,\beta_1)), f|_{B(\bar{P})}((\alpha_1,\beta_1)) \rangle' \\
&= \langle f((\alpha_1,\beta_1)), f((\alpha_1,\beta_1)) \rangle.
\end{align*}
If $(\alpha_1,\beta_1) \in S$, then $f((\alpha_1,\beta_1)) \in S$, so
$$\langle (\alpha_1,\beta_1), (\alpha_1,\beta_1) \rangle = \langle f((\alpha_1,\beta_1)), f((\alpha_1,\beta_1)) \rangle = \frac{\sum_{t=1}^r m_t}{2}.$$
Thus $(\rmn3)$ holds.

For $(\rmn1)$, we first prove the claim that for any $y   \succ     x_{i2}$,
\begin{align}\label{inner product}
\langle [\overline{(x_{i1},x_{i2})}, \overline{(x_{i2},y)}], \overline{(x_{i1},y)} \rangle = \frac{1}{\left|(x_{i1},x_{i2})\right|}.
\end{align}

Indeed,
\begin{align*}
\langle [\overline{(x_{i1},x_{i2})}, \overline{(x_{i2},y)}], \overline{(x_{i1},y)} \rangle
&= \frac{\langle [(x_{i1},x_{i2}),(x_{i2},y)], (x_{i1},y) \rangle}
        {\left|(x_{i1},x_{i2})\right|\;\left|(x_{i2},y)\right|\;\left|(x_{i1},y)\right|} \\
&= \frac{\langle (x_{i1},y), (x_{i1},y) \rangle}
        {\left|(x_{i1},x_{i2})\right|\;\left|(x_{i2},y)\right|\;\left|(x_{i1},y)\right|} \\
&= \frac{\left|(x_{i1},y)\right|}{\left|(x_{i1},x_{i2})\right|\;\left|(x_{i2},y)\right|}.
\end{align*}

By Proposition~\ref{prop:aut extend}, there exists $f \in \Aut(\bar{P})$ such that $f((x_{i2},y)) = (x_{i1},y)$. By the induction hypothesis, $f$ preserves $\langle\cdot,\cdot\rangle'$, hence
\begin{align*}
\langle (x_{i1},y), (x_{i1},y) \rangle
&= \langle (x_{i1},y), (x_{i1},y) \rangle' \\
&= \langle (x_{i2},y), (x_{i2},y) \rangle' \\
&= \langle (x_{i2},y), (x_{i2},y) \rangle,
\end{align*}
so $\left|(x_{i1},y)\right| = \left|(x_{i2},y)\right|$. The claim follows.

Using this claim, we rewrite the formulae in Theorem~\ref{Thm:Ricci curvature}.
\begin{itemize}
\item For each $(a_i,b_j) \in S$,
$$\Ric_{R(P)}(\overline{(a_i,b_j)}) = -\frac{1}{2}\sum_{\{y\, \mid \, y \succ b_j  \}} \frac{1}{\left|(a_i,b_j)\right|^2} \; \overline{(a_i,b_j)}.$$

\item For each $(a_i,y) \in B_1$,
$$\Ric_{R(P)}(\overline{(a_i,y)}) = \Ric_{R(\bar{P})}(\overline{(a_i,y)}) + \frac{1}{2} \sum_{\{k\, \mid \, a_i   \prec     b_k   \prec     y\}} \frac{1}{\left|(a_i,b_k)\right|^2} \; \overline{(a_i,y)}.$$

\item For each $(b_j,y) \in B_2$,
$$\Ric_{R(P)}(\overline{(b_j,y)}) = \Ric_{R(\bar{P})}(\overline{(b_j,y)}) - \frac{1}{2} \sum_{\{k\, \mid \, a_k   \prec     b_j   \prec     y\}} \frac{1}{\left|(a_k,b_j)\right|^2} \; \overline{(b_j,y)}.$$

\item For each $(y_1,y_2) \in B_3$,
$$\Ric_{R(P)}(\overline{(y_1,y_2)}) = \Ric_{R(\bar{P})}(\overline{(y_1,y_2)}).$$
\end{itemize}

Now we show $\Ric_{R(P)} = -\id + D$. By the induction hypothesis, $\Ric_{R(\bar{P})} = -\id + H$, where $H \in \Der(R(\bar{P}))$ is diagonal with respect to $B(\bar{P})$. Hence we can write
$$\Ric_{R(P)} = -\id + \hat{H} + A,$$
where $\hat{H}$ is the extension of $H$ as in Proposition~\ref{prop:derivation extend}, and
\[
A(x) =
\begin{cases}
\bigl(-\frac{1}{2}\sum_{\{y\, \mid \, y \succ b_j  \}} \frac{1}{\left|(a_i,b_j)\right|^2} + 1\bigr)x, & \text{if } x = (a_i,b_j) \in S, \\[4pt]
\bigl(\frac{1}{2} \sum_{\{k\, \mid \, a_i   \prec     b_k   \prec     y\}} \frac{1}{\left|(a_i,b_k)\right|^2}\bigr)x, & \text{if } x = (a_i,y) \in B_1, \\[4pt]
\bigl(-\frac{1}{2} \sum_{\{k\, \mid \, a_k   \prec     b_j   \prec     y\}} \frac{1}{\left|(a_k,b_j)\right|^2}\bigr)x, & \text{if } x = (b_j,y) \in B_2, \\[4pt]
0, & \text{if } x = (y_1,y_2) \in B_3.
\end{cases}
\]
It is easy to see that $\hat{H}+A$ is diagonal with respect to $B(P)$. For every $f \in \Aut(\bar{P})$, we have $f([x,y]) = [f(x), f(y)]$ and $\langle x, x \rangle' = \langle f(x), f(x) \rangle'$, which implies that $f$ commutes with $\Ric_{R(\bar{P})}$, hence with $H = \Ric_{R(\bar{P})} + \id_{R(\bar{P})}$. Therefore, by Proposition~\ref{prop:derivation extend}, $\hat{H} \in \Der(R(P))$.

It remains to show $A \in \Der(R(P))$, i.e., for all $x, y \in B(P)$,
$$A[x, y] = [Ax, y] + [x, Ay].$$
This holds for $x, y \in B(P)$ with $[x, y] = 0$, since $A$ is diagonal. The nonzero brackets are:

$(1)$ $[(y_1,y_2), (y_2,y_3)] = (y_1,y_3)$ with $(y_1,y_2), (y_2,y_3) \in B_3$,

$(2)$ $[(a_i,y_1), (y_1,y_2)] = (a_i,y_2)$ with $(a_i,y_1) \in B_1$, $(y_1,y_2) \in B_3$, \linebreak[4]
 $[(b_j,y_1), (y_1,y_2)] = (b_j,y_2)$ with $(b_j,y_1) \in B_2$, $(y_1,y_2) \in B_3$,

$(3)$ $[(a_i,b_j), (b_j,y)] = (a_i,y)$ with $(a_i,b_j) \in S$, $(b_j,y) \in B_2$.

Case $(1)$ is trivial. For case $(2)$, by the structure of $P$, the sets $\{k \in J \mid a_i   \prec     b_k   \prec     y_1\}$ and $\{k \in J \mid a_i   \prec     b_k   \prec     y_2\}$ are equal for each $i \in I$, and similarly $\{k \in I \mid a_k   \prec     b_j   \prec     y_1\} = \{k \in I \mid a_k   \prec     b_j   \prec     y_2\}$ for each $j \in J$. Hence the equality holds. For case $(3)$, the equality reduces to
$$\frac{1}{2}\sum_{\{y\, \mid \, y \succ b_j  \}} \frac{1}{\left|(a_i,b_j)\right|^2}
+ \frac{1}{2} \sum_{\{k\, \mid \, a_i   \prec     b_k   \prec     y\}} \frac{1}{\left|(a_i,b_k)\right|^2}
+ \frac{1}{2} \sum_{\{k\, \mid \, a_k   \prec     b_j   \prec     y\}} \frac{1}{\left|(a_k,b_j)\right|^2}
= 1.$$
By the structure of $P$ and the definition of $\langle\cdot,\cdot\rangle$,
\begin{align*}
&\; \frac{1}{2}\sum_{\{y\, \mid \, y \succ b_j  \}} \frac{1}{\left|(a_i,b_j)\right|^2}
   + \frac{1}{2} \sum_{\{k\, \mid \, a_i   \prec     b_k   \prec     y\}} \frac{1}{\left|(a_i,b_k)\right|^2}
   + \frac{1}{2} \sum_{\{k\, \mid \, a_k   \prec     b_j   \prec     y\}} \frac{1}{\left|(a_k,b_j)\right|^2} \\
&= \frac{1}{2}\Bigl(\sum_{t=3}^r m_t\Bigr) \frac{2}{\sum_{t=1}^r m_t}
   + \frac{1}{2} m_2 \frac{2}{\sum_{t=1}^r m_t}
   + \frac{1}{2} m_1 \frac{2}{\sum_{t=1}^r m_t} \\
&= 1.
\end{align*}
Hence $A \in \Der(R(P))$, proving $(\rmn1)$ and completing the proof.
\end{proof}

We now give a more detailed description of the inner product obtained in the above theorem.

\begin{prop}\label{prop:same inner product}
Let $P$ be a finite array TAOS, let $R(P)$ be the nilpotent Lie algebra obtained from $P$, and let $\langle\cdot,\cdot\rangle$ be the inner product from Theorem~\ref{Thm:Ricci soliton}. Let $x, y, z \in P$ satisfy $x   \prec     y   \prec     z$, and suppose there is no $t \in P$ such that $y   \prec     t   \prec     z$. Then $\left|(x,z)\right| = \left|(y,z)\right|$ with respect to $\langle\cdot,\cdot\rangle$.
\end{prop}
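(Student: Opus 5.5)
Because the inner product in Theorem~\ref{Thm:Ricci soliton} is built inductively, I will prove the statement by induction on the maximal chain length $l$ of the array TAOS $P$, running along the same recursion $P \leadsto \bar{P}$. For $l=2$ there is no chain $x \prec y \prec z$, so the statement is vacuous. For the inductive step I assume the claim holds for $\bar{P}$, which is again an array TAOS and carries the inner product $\langle\cdot,\cdot\rangle'$ with $\langle\cdot,\cdot\rangle|_{R(\bar P)\times R(\bar P)}=\langle\cdot,\cdot\rangle'$. Take $x\prec y\prec z$ in $P$ with no $t$ satisfying $y\prec t\prec z$.

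First I use the array structure to place the elements. The elements of $P$ lie in columns $1,\dots,r$, and every relation goes from a lower column to a strictly higher one. Since $y\prec z$ is a covering relation, $y$ lies in some column $c$ and $z$ lies among the chosen $m_{c+1}$ elements of column $c+1$; in particular $z$ is not in column $1$ or $2$ unless $c=1$. I then split into cases according to whether the pairs $(x,z)$ and $(y,z)$ belong to $S$ or to $B(\bar P)$. Neither $(x,z)$ nor $(y,z)$ can lie in $S$: $S$ consists of pairs $(a_i,b_j)$ with $b_j$ in column $2$, and if $z$ were in column $2$ then $x$ and $y$ would both have to lie in column $1$, contradicting $x\prec y$. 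So both pairs lie in $B(\bar P)$, and the task is to compare their lengths under $\langle\cdot,\cdot\rangle'$.

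\emph{Case (a):} the relations $x\prec y\prec z$ persist in $\bar P$. Then the covering condition still holds in $\bar P$, because $\bar P$ has fewer relations, so the induction hypothesis applies directly.

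\emph{Case (b):} $x\prec y$ is removed, i.e.\ $x=a_i$ is in column $1$ and $y=b_j$ is a chosen element of column $2$. Here I argue as in the claim inside the proof of Theorem~\ref{Thm:Ricci soliton}. Proposition~\ref{prop:aut extend}, which applies because the exchange condition holds for array TAOSs, gives $f\in\Aut(\bar P)$ with $f((b_j,z))=(a_i,z)$. The induction hypothesis (iii) says $f$ preserves $\langle\cdot,\cdot\rangle'$, so $|(a_i,z)|=|(b_j,z)|$.

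\emph{Case (c):} $x\prec y$ is a relation of $\bar P$ but $x=a_i$ and $y$ is an unchosen or later-column element. This cannot happen with $y\prec z$ covering unless $y$ is in column $\ge 2$. I would check that the relations among $x,y,z$ in $\bar P$ are unchanged, so this case reduces to (a).

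The main obstacle is making sure the covering hypothesis transfers correctly to $\bar P$ and that every case is genuinely covered. Case (b) is where the induction hypothesis alone fails and the automorphism argument is essential. I would first write out the column bookkeeping carefully to confirm that the only relations deleted in passing to $\bar P$ are column-$1$-to-column-$2$ covers, which makes the case split exhaustive.
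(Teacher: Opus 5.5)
Your proof is correct, but its decisive step differs from the paper's.

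The paper argues by bookkeeping along the recursion $P, \bar P, \bar{\bar P},\dots$. A basis element $(u,v)$ acquires its norm at the step at which the column of $v$ has become the second column of the current TAOS; if $v$ lies in the last column, it gets norm $1$ at the base step. At each step every element of the current $S$ is given the same squared norm $\frac12\sum_t m_t$, computed from the current parameters. Since $(x,z)$ and $(y,z)$ share the second coordinate $z$, they lie in the same layer and have equal norms by definition.

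You instead use the induction hypothesis to descend until $(x,y)\in S$. Only then do you invoke the swap automorphism of Proposition~\ref{prop:aut extend} together with the $\Aut(\bar P)$-invariance (iii). In effect you iterate the claim $|(x_{i1},y)|=|(x_{i2},y)|$ proved inside Theorem~\ref{Thm:Ricci soliton}.

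What each route buys:
\begin{itemize}
\item Your route never needs to identify the layers explicitly. The paper's assertion that the two pairs ``belong to the same set $S$'' silently relies on exactly that column bookkeeping. The price is that you depend on (iii) and on the exchange condition.
\item The paper's route yields more: $|(u,z)|$ depends only on the column of $z$. So the covering hypothesis is superfluous. In your argument it likewise plays no essential role beyond keeping the inductive statement closed.
\end{itemize}

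Finally, your case (c) is redundant. Having shown $(x,z),(y,z)\notin S$, the dichotomy $(x,y)\in S$ (case (b)) versus $(x,y)\notin S$ (case (a)) is already exhaustive. Also, $y$ cannot be unchosen once $x\prec y$.
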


\begin{proof}
Denote $\bar{P}$ by $F(P)$. Let $r$ be the maximal length of chains in $P$. Then $2$ is the maximal length of chains in $F^{(r-2)}(P)$. We extend the inner product on $R(F^{(r-2)}(P))$ step by step to obtain the inner product $\langle\cdot,\cdot\rangle$ on $R(P)$. At each inductive step, we obtain a set $S = B(F^{i}(P)) \setminus B(F^{(i+1)}(P))$. Throughout this procedure, $(x,z)$ and $(y,z)$ belong to the same set $S$ obtained at the same inductive step. By the definition of $\langle\cdot,\cdot\rangle$ in Theorem~\ref{Thm:Ricci soliton}, we have $\langle (x,z), (x,z) \rangle = \langle (y,z), (y,z) \rangle$, i.e., $\left|(x,z)\right| = \left|(y,z)\right|$.
\end{proof}

Using this proposition, we can easily compute the norm of any element of $B(P)$.

\section{Generalizations}

In this section, we give some generalizations of our main theorem.

First, we make the following observation.

\begin{prop}\label{prop:direct sum}
Let $(\g_1, \langle\cdot,\cdot\rangle_1)$ and $(\g_2, \langle\cdot,\cdot\rangle_2)$ be two metric Lie algebras that are algebraic Ricci solitons with respect to the same constant $c$. Then there exists an inner product $\langle\cdot,\cdot\rangle$ on $\g_1 \oplus \g_2$ such that $(\g_1 \oplus \g_2, \langle\cdot,\cdot\rangle)$ is an algebraic Ricci soliton with respect to $c$. Moreover, if $(\g_1, \langle\cdot,\cdot\rangle_1), \dots, (\g_k, \langle\cdot,\cdot\rangle_k)$ are metric Lie algebras that are algebraic Ricci solitons with respect to the same constant $c$, then there exists an inner product $\langle\cdot,\cdot\rangle$ on $\bigoplus_{i=1}^k \g_i$ such that $(\bigoplus_{i=1}^k \g_i, \langle\cdot,\cdot\rangle)$ is an algebraic Ricci soliton with respect to $c$.
\end{prop}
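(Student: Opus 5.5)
The plan is to take the orthogonal direct sum. Define $\langle\cdot,\cdot\rangle$ on $\g = \g_1 \oplus \g_2$ by declaring $\g_1 \perp \g_2$ and letting $\langle\cdot,\cdot\rangle$ restrict to $\langle\cdot,\cdot\rangle_1$ and $\langle\cdot,\cdot\rangle_2$ on the two summands. Since $[\g_1,\g_2]=0$ and each $\g_i$ is an ideal, the Levi-Civita formula
\[
2\langle \nabla_X Y,Z\rangle=\langle[X,Y],Z\rangle+\langle[Z,X],Y\rangle+\langle X,[Z,Y]\rangle
\]
gives the following. If $X,Y$ both lie in $\g_i$, then $\nabla_X Y$ equals the Levi-Civita connection $\nabla^i_X Y$ of $(\g_i,\langle\cdot,\cdot\rangle_i)$. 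If $X\in\g_1$ and $Y\in\g_2$ (or vice versa), then $\nabla_X Y=0$. I would verify this by testing against $Z$ in each summand separately: every term involves a bracket of elements from different summands, or an inner product of elements from different summands, and so vanishes.

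Next I compute the curvature. For $X,Y,Z$ all in $\g_i$ we get $R(X,Y)Z=R^i(X,Y)Z$. As soon as the arguments come from different summands, $R(X,Y)Z=0$. Now take an orthonormal basis of $\g$ that is the union of orthonormal bases of $\g_1$ and $\g_2$. Then for $X\in\g_i$,
\[
\Ric(X)=\sum_{e\in \g_i} R^i(X,e)e=\Ric_i(X),
\]
so $\Ric=\Ric_1\oplus\Ric_2$.

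Now suppose $\Ric_i=c\cdot\id_{\g_i}+D_i$ with $D_i\in\Der(\g_i)$. Set $D=D_1\oplus D_2$. It is a derivation of $\g$: on $\g_i\times\g_i$ this reduces to $D_i$ being a derivation, and for $a\in\g_1$, $b\in\g_2$ both sides of the derivation identity vanish, because $D$ preserves each summand and $[\g_1,\g_2]=0$. Hence $\Ric=c\cdot\id+D$. The case of $k$ summands follows by induction on $k$, applying the two-summand case to $(\bigoplus_{i<k}\g_i)\oplus\g_k$ with the same constant $c$.

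I expect the only step needing care to be the mixed terms in the connection and curvature. In particular, $\nabla_{[X,Y]}Z$ must be handled correctly when $X,Y\in\g_i$ and $Z\in\g_j$ with $j\ne i$: it vanishes because $[X,Y]\in\g_i$. Alternatively, in the nilpotent setting one can bypass the connection entirely and read the splitting $\Ric=\Ric_1\oplus\Ric_2$ directly from formula~(\ref{Ric_nil}). With an adapted orthonormal basis, each bracket appearing there either lies in a single summand or is zero, so all cross terms vanish.
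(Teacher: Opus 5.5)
Your proposal is correct. The orthogonal direct sum, with $\Ric=\Ric_1\oplus\Ric_2$ and $D=D_1\oplus D_2$, is the argument the paper implicitly relies on, although the paper records this proposition only as an ``observation'' and gives no proof of its own. Your connection-level computation establishes the statement for arbitrary metric Lie algebras, as written. The shortcut via (\ref{Ric_nil}) covers only the nilpotent case, but that is all Theorems~\ref{Thm:alg conected} and~\ref{Thm:multiedge} actually use.
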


We introduce the notion of algebraic connectivity of quivers, for details, see \cite{zheng2026standardpolynomialsprincipalsubalgebras}.

\begin{defi}
A finite quiver $Q$ is said to be algebraically connected if $\n_Q$ is not isomorphic to a direct sum of more than one path algebra of a quiver as associative algebras.

If a finite quiver $Q$ is not algebraically connected, since $Q$ is finite, we can always write $\n_Q \cong \bigoplus_{i=1}^n \n_{E_i}$, where the $E_i$ are finite algebraically connected subquivers of $Q$. These $E_i$ are called the algebraically connected components of $Q$.
\end{defi}

\begin{thm}\label{Thm:alg conected}
Let $P$ be a finite TAOS and let $Q$ be its Hasse quiver. Suppose  all algebraically connected components of $Q$ are Hasse quivers of some array TAOSs. If 
for any two distinct algebraically connected components \(Q^{i}\) and
\(Q^{j}\) of \(Q\), there do not exist paths of length at least two \(p_i\) in
\(Q^{i}\) and \(p_j\) in \(Q^{j}\) such that $s(p_i)=s(p_j)$, $t(p_i)=t(p_j)$,  
then there exists an inner product $\langle\cdot,\cdot\rangle$ on $R(P)$ such that $(R(P), \langle\cdot,\cdot\rangle)$ is an algebraic Ricci soliton with respect to $-1$.

\end{thm}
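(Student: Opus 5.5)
The plan is to reduce the statement to Theorem~\ref{Thm:Ricci soliton} applied componentwise, glued together by Proposition~\ref{prop:direct sum}. Concretely, we aim to show that under the path hypothesis, $R(P)$ decomposes as a Lie algebra direct sum $\bigoplus_{i} R(P_i)$, where $P_i$ is an array TAOS whose Hasse quiver is the algebraically connected component $Q^{i}$. Each summand then carries an inner product from Theorem~\ref{Thm:Ricci soliton} with $\Ric = -\id + D_i$, so all summands are algebraic Ricci solitons with the same constant $c=-1$. Proposition~\ref{prop:direct sum} then produces the required inner product on $R(P)$; explicitly it is the orthogonal sum, since brackets between different summands vanish and the Ricci operator, computed by (\ref{Ric_nil}), splits blockwise as $\bigoplus_i(-\id + D_i)$, with $\bigoplus_i D_i$ a derivation.

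The first step is to use Proposition~\ref{prop:associative algebra iso}, which gives $R(P)\cong \n_Q/I$. The decomposition $\n_Q\cong\bigoplus_i \n_{Q^{i}}$ is into two-sided ideals. The arrows of $Q$ are partitioned among the $Q^{i}$, and a path of length $\geq 2$ lies in a single component, since products of arrows from different summands vanish. Next we check that $I$ splits as $\bigoplus_i I_i$, where $I_i\subseteq \n_{Q^{i}}$ is generated by differences of parallel paths of length $\geq 2$ inside $Q^{i}$. Every generator $\omega_1-\omega_2$ of $I$ has $\omega_1,\omega_2$ each contained in some component. The hypothesis forbids parallel paths of length $\geq 2$ lying in distinct components, so $\omega_1$ and $\omega_2$ lie in the same $Q^{i}$ and the generator belongs to $I_i$. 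Hence $\n_Q/I\cong\bigoplus_i \n_{Q^{i}}/I_i$, both as associative algebras and, by taking commutators, as Lie algebras.

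The second step is to identify $\n_{Q^{i}}/I_i$ with $R(P_i)$, where $P_i$ is the array TAOS whose Hasse quiver is $Q^{i}$. This follows by applying Proposition~\ref{prop:associative algebra iso} to $P_i$. We need only that $I_i$ is exactly the ideal attached to $Q^{i}$ in that proposition, and this is its definition.

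The main obstacle I expect is the splitting of $I$. One worry is that the ideal generated in $\n_Q$ by a generator from $Q^{i}$ could leak into other summands; it cannot, because each $\n_{Q^{i}}$ is a two-sided ideal and multiplying by elements of other summands gives zero. The other is a mixed generator with $\omega_1\in Q^{i}$ and $\omega_2\in Q^{j}$; this is exactly what the hypothesis excludes. I would also double-check that the Hasse quiver of $P$ has no multiple arrows, so that the components really are Hasse quivers of the $P_i$.
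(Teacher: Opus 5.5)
Your proposal is correct and follows the same route as the paper. You decompose $R(P)\cong\bigoplus_i R(P_i)$ using the algebraically connected components together with the no-parallel-paths hypothesis, apply Theorem~\ref{Thm:Ricci soliton} to each array TAOS $P_i$, and glue the pieces with Proposition~\ref{prop:direct sum}. The paper states that decomposition in one line, whereas you justify it by splitting the ideal $I$ of Proposition~\ref{prop:associative algebra iso} componentwise, which is exactly the detail the paper leaves implicit.
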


\begin{rem}
\[\begin{tikzcd}[row sep=1.0em, column sep=1.5em]
  & {y_1} &&& x & {y_1} & z \\
  x && z &&&& {} \\
  & {y_2} &&& x & {y_2} & z
  \arrow[from=1-2, to=2-3]
  \arrow[from=1-5, to=1-6]
  \arrow[from=1-6, to=1-7]
  \arrow[from=2-1, to=1-2]
  \arrow[from=2-1, to=3-2]
  \arrow[from=3-2, to=2-3]
  \arrow[from=3-5, to=3-6]
  \arrow[from=3-6, to=3-7]
\end{tikzcd}\]

Let $P$ be a finite TAOS and let $Q$ be its Hasse quiver. The fact that $P$ is a finite array TAOS does not  imply that all algebraically connected components of $Q$ are Hasse quivers of some array TAOSs, and 
for any two distinct algebraically connected components \(Q^{i}\) and
\(Q^{j}\) of \(Q\), there do not exist paths of length at least two \(p_i\) in
\(Q^{i}\) and \(p_j\) in \(Q^{j}\) such that $s(p_i)=s(p_j)$, $t(p_i)=t(p_j)$.
For example,  see the quiver $\hat{Q}$ on the left of the above figure. It is   the Hasse quiver of the array TAOS with parameters $n=m=(1,2,1)$, and its algebraically connected components $\hat{Q}^1$ and $\hat{Q}^2$ are on the right. We can see that both $\hat{Q}^1$ and $\hat{Q}^2$ contain a path of length two from $x$ to $z$.
\end{rem}

\begin{proof}[Proof of Theorem~\ref{Thm:alg conected}]
By the definition of algebraically connected components and the hypothesis, $R(P) \cong \bigoplus_i R(P_i)$, where the $P_i$ are the TAOSs whose Hasse quivers are the algebraically connected components of $Q$. Moreover, each $P_i$ is an array TAOS. By Theorem~\ref{Thm:Ricci soliton}, for each $R(P_i)$ there exists an inner product $\langle\cdot,\cdot\rangle_i$ such that $(R(P_i), \langle\cdot,\cdot\rangle_i)$ is an algebraic Ricci soliton with respect to $-1$. The result then follows from Proposition~\ref{prop:direct sum}.
\end{proof}

\begin{ex}
\[
\begin{tikzcd}[row sep=1.15em, column sep=1.5em,
  cells={nodes={inner sep=1pt}}]
& \bullet & \bullet & \bullet & \bullet & \bullet & \bullet & \\
& \bullet & \bullet & \bullet && \bullet & \bullet \\
&& \bullet &&& \bullet & \bullet \\
&&&&&& \bullet \\
\\[-1.0em]
\bullet & \bullet & \bullet & \bullet && \bullet & \bullet & \bullet \\
\bullet & \bullet & \bullet &&& \bullet & \bullet \\
& \bullet &&&& \bullet & \bullet \\
&&&&& \bullet
\arrow[from=1-2, to=1-3]
\arrow[from=1-2, to=2-3]
\arrow[from=1-3, to=1-4]
\arrow[from=1-3, to=2-4]
\arrow[from=1-4, to=1-5]
\arrow[from=1-6, to=1-5]
\arrow[from=1-7, to=1-6]
\arrow[from=1-7, to=2-6]
\arrow[from=2-2, to=1-3]
\arrow[from=2-2, to=2-3]
\arrow[from=2-3, to=1-4]
\arrow[from=2-3, to=2-4]
\arrow[from=2-4, to=1-5]
\arrow[from=2-6, to=1-5]
\arrow[from=2-7, to=1-6]
\arrow[from=2-7, to=2-6]
\arrow[from=3-3, to=1-4]
\arrow[from=3-3, to=2-4]
\arrow[from=3-6, to=1-5]
\arrow[from=3-7, to=1-6]
\arrow[from=3-7, to=2-6]
\arrow[from=4-7, to=1-6]
\arrow[from=4-7, to=2-6]
\arrow[from=6-1, to=6-2]
\arrow[from=6-1, to=7-2]
\arrow[from=6-2, to=6-3]
\arrow[from=6-2, to=7-3]
\arrow[from=6-3, to=6-4]
\arrow[from=6-6, to=6-7]
\arrow[from=6-6, to=7-7]
\arrow[from=6-7, to=6-8]
\arrow[from=7-1, to=6-2]
\arrow[from=7-1, to=7-2]
\arrow[from=7-2, to=6-3]
\arrow[from=7-2, to=7-3]
\arrow[from=7-3, to=6-4]
\arrow[from=7-6, to=6-7]
\arrow[from=7-6, to=7-7]
\arrow[from=7-7, to=6-8]
\arrow[from=8-2, to=6-3]
\arrow[from=8-2, to=7-3]
\arrow[from=8-6, to=6-7]
\arrow[from=8-6, to=7-7]
\arrow[from=8-7, to=6-8]
\arrow[from=9-6, to=6-7]
\arrow[from=9-6, to=7-7]
\end{tikzcd}\]

There exists an inner product $\langle\cdot,\cdot\rangle$ on the incidence algebra $R(P)$ of the TAOS $P$ whose Hasse quiver $Q$ is the quiver in the upper part of the figure, such that $(R(P), \langle\cdot,\cdot\rangle)$ is an algebraic Ricci soliton. This holds because the algebraically connected components of $Q$ are the two quivers $Q^1$, $Q^2$ in the lower part, which are Hasse quivers of array TAOSs, and there do not exist paths of length at least two \(p_1\) in
\(Q^{1}\) and \(p_2\) in \(Q^{2}\) such that $s(p_1)=s(p_2)$, $t(p_1)=t(p_2)$.
\end{ex}

The Hasse quiver of a TAOS is necessarily a $0$--$1$ quiver, meaning that for any two vertices $a$ and $b$, there is at most one arrow from $a$ to $b$. We can also consider nilpotent Lie algebras obtained from quivers that allow multiple arrows with relations.

We need the following observation to obtain our generalization about nilpotent Lie algebras obtained from quivers with multiple arrows and relations.

\begin{prop}\label{prop:abelian direct sum}
Let $(\g_1, \langle\cdot,\cdot\rangle_1)$ be a metric Lie algebra that is an algebraic Ricci soliton with respect to $c$, and let $\g_2$ be an abelian Lie algebra. Then there exists an inner product $\langle\cdot,\cdot\rangle$ on $\g_1 \oplus \g_2$ such that $(\g_1 \oplus \g_2, \langle\cdot,\cdot\rangle)$ is an algebraic Ricci soliton with respect to $c$.
\end{prop}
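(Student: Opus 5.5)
The plan is to use the fact that, for an abelian Lie algebra, any inner product has identically zero Ricci operator, and that the Ricci operator of an orthogonal direct sum of Lie algebras splits. We take $\langle\cdot,\cdot\rangle_2$ to be an arbitrary inner product on $\g_2$. On $\g_1 \oplus \g_2$ we define $\langle\cdot,\cdot\rangle$ as the orthogonal sum: it restricts to $\langle\cdot,\cdot\rangle_1$ on $\g_1$, to $\langle\cdot,\cdot\rangle_2$ on $\g_2$, and makes $\g_1 \perp \g_2$.

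The first step is to show $\Ric = \Ric_1 \oplus 0$. We choose an orthonormal basis $\{X_i\}$ of $\g_1$ and an orthonormal basis $\{Y_j\}$ of $\g_2$, so their union is orthonormal for $\g_1\oplus\g_2$. Since $[\g_1,\g_2]=0$ and $[\g_2,\g_2]=0$, the Levi-Civita formula
\[
2\langle \nabla_X Y,Z\rangle=\langle[X,Y],Z\rangle+\langle[Z,X],Y\rangle+\langle X,[Z,Y]\rangle
\]
gives two facts. First, $\nabla_X Y = 0$ whenever $X$ or $Y$ lies in $\g_2$. Second, for $X,Y \in \g_1$ the vector $\nabla_X Y$ lies in $\g_1$ and equals the Levi-Civita connection of $(\g_1,\langle\cdot,\cdot\rangle_1)$. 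The only term that needs checking is $\langle[Z,X],Y\rangle$ with $Z\in\g_2$, and it vanishes. Hence $R(X,Y)Z$ agrees with the curvature of $\g_1$ on $\g_1$ and vanishes as soon as any argument is in $\g_2$. Summing over the orthonormal basis then gives $\Ric|_{\g_1} = \Ric_1$ and $\Ric|_{\g_2}=0$. This step is routine. Alternatively, in the nilpotent setting one can read the splitting directly off formula (\ref{Ric_nil}).

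Next, we write $\Ric_1 = c\,\id_{\g_1} + D_1$ with $D_1 \in \Der(\g_1)$, and set $D = D_1 \oplus (-c\,\id_{\g_2})$. Then $\Ric = c\,\id + D$ on $\g_1\oplus\g_2$. It remains to check that $D$ is a derivation. Brackets inside $\g_1$ are preserved because $D_1$ is a derivation. Any bracket involving an element of $\g_2$ is zero. Moreover, $D$ maps $\g_1$ into $\g_1$ and $\g_2$ into $\g_2$, so both sides of the derivation identity vanish on mixed pairs and on pairs from $\g_2$. Thus any linear endomorphism of the abelian factor works, and we are done.

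The only genuine point needing care is the splitting of $\Ric$, which is the main (mild) obstacle. I would handle it by the connection computation above rather than by quoting Proposition~\ref{prop:direct sum}. That proposition would require $\g_2$ to be a soliton with constant $c$; an abelian algebra is one, since $0 = c\,\id + (-c\,\id)$ and $-c\,\id$ is a derivation of an abelian Lie algebra. So, alternatively, the statement follows immediately by applying Proposition~\ref{prop:direct sum} to $(\g_1,\langle\cdot,\cdot\rangle_1)$ and $(\g_2,\langle\cdot,\cdot\rangle_2)$.
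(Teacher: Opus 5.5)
Your argument is correct. The fallback in your last paragraph is exactly the paper's proof. The paper notes that for any inner product on $\g_2$ one has $\Ric_{\g_2}=0=c\,\id+(-c\,\id)$ with $-c\,\id\in\Der(\g_2)$, so $\g_2$ is itself a soliton with constant $c$. It then simply invokes Proposition~\ref{prop:direct sum}.

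Your main route differs: you do not quote that proposition. Instead you verify the splitting $\Ric=\Ric_1\oplus 0$ for the orthogonal sum directly from the Koszul formula, and then check by hand that $D_1\oplus(-c\,\id_{\g_2})$ is a derivation.

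The paper's reduction is a one-liner, but it rests on Proposition~\ref{prop:direct sum}, which the paper states only as an ``observation'' without proof. Your computation is self-contained and in effect supplies that proof in the case at hand. The same Koszul computation, using only that $\g_1$ and $\g_2$ are orthogonal commuting ideals, would also give the general direct-sum statement.

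A minor wording point: in the connection step it is not only $\langle[Z,X],Y\rangle$ that needs checking. The clean reason every term with an argument in $\g_2$ vanishes is that $\g_2$ is central and orthogonal to $[\g_1\oplus\g_2,\g_1\oplus\g_2]\subseteq\g_1$. So each term is zero either because the bracket is zero or by orthogonality.
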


\begin{proof}
For any inner product $\langle\cdot,\cdot\rangle_2$ on $\g_2$, we have $-c \cdot \id|_{\g_2} \in \Der(\g_2)$ and $0 = \Ric_{\g_2} = c \cdot \id|_{\g_2} + (-c \cdot \id|_{\g_2})$, so $(\g_2, \langle\cdot,\cdot\rangle_2)$ is an algebraic Ricci soliton with respect to $c$. The result follows from Proposition~\ref{prop:direct sum}.
\end{proof}

\begin{thm}\label{Thm:multiedge}
Let $P$ be a finite TAOS, let $Q$ be its Hasse quiver. Suppose $Q$ satisfies  one of the following: $(\rmn1)$ $Q$ is the Hasse quiver of an array TAOS; $(\rmn2)$  all algebraically connected components of $Q$ are Hasse quivers of some array TAOSs, and 
for any two distinct algebraically connected components \(Q^{i}\) and
\(Q^{j}\) of \(Q\), there do not exist paths of length at least two \(p_i\) in
\(Q^{i}\) and \(p_j\) in \(Q^{j}\) such that $s(p_i)=s(p_j)$, $t(p_i)=t(p_j)$.
 Let $Q'$ be the quiver $Q$ with some extra multiple arrows, i.e., $Q'_0 = Q_0$, $Q'_1 = Q_1 \sqcup E$, and for any arrow $\alpha_1 \in E$, there exists an arrow $\alpha_2 \in Q_1$ such that $s(\alpha_1) = s(\alpha_2)$ and $t(\alpha_1) = t(\alpha_2)$. Then there exists an inner product $\langle\cdot,\cdot\rangle$ on $\n_{Q'} / I$ such that $(\n_{Q'} / I, \langle\cdot,\cdot\rangle)$ is an algebraic Ricci soliton with respect to $-1$, where $I$ is the ideal of $\n_{Q'}$ generated by all differences $\omega_1 - \omega_2$ of two paths $\omega_1$ and $\omega_2$ of length at least two that start at the same vertex and end at the same vertex.
\end{thm}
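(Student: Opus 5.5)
The plan is to identify $\n_{Q'}/I$ with $R(P)\oplus\mathfrak{a}$ as Lie algebras, where $\mathfrak{a}$ is abelian, and then to invoke Proposition~\ref{prop:abelian direct sum} together with Theorem~\ref{Thm:Ricci soliton} (case $(\rmn1)$) or Theorem~\ref{Thm:alg conected} (case $(\rmn2)$).

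First, I describe a basis of $\n_{Q'}/I$. Any two paths of length at least two with the same source and target are identified modulo $I$. For each pair $s\prec t$ joined by a path of length at least two in $Q'$, the class of such a path is therefore a single basis vector $[\omega_{s,t}]$. Paths of length at least two in $Q'$ and in $Q$ connect the same pairs of vertices, because every extra arrow is parallel to an arrow of $Q$. The remaining basis vectors are the classes of arrows of $Q'$.

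For each $\alpha_1\in E$, choose the parallel arrow $\alpha_2\in Q_1$ and set $z_{\alpha_1}=\alpha_1-\alpha_2$. The key computation is that $z_{\alpha_1}$ is central in $\n_{Q'}/I$. Indeed, for any path $\beta$ with $t(\alpha_1)=s(\beta)$, the paths $\alpha_1\beta$ and $\alpha_2\beta$ have length at least two and share source and target, so $\alpha_1\beta\equiv\alpha_2\beta$. The same holds on the left. Hence $z_{\alpha_1}\cdot x=x\cdot z_{\alpha_1}=0$ for every basis element $x$.

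Next, I check that the span of $Q_1$ together with the classes of longer paths is a subalgebra isomorphic to $\n_Q/I_Q\cong R(P)$, using Proposition~\ref{prop:associative algebra iso}. Products stay inside it: a product of two arrows of $Q'$ is a length-two path, whose class depends only on its endpoints, and such a product lies in this span. So $\n_{Q'}/I=\mathrm{span}(Q_1\cup\{\text{long classes}\})\oplus\mathrm{span}\{z_\alpha\}$. Both summands are ideals and their bracket vanishes, giving a Lie algebra isomorphism $\n_{Q'}/I\cong R(P)\oplus\mathfrak{a}$ with $\mathfrak{a}$ abelian of dimension $|E|$.

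The main obstacle I expect is verifying rigorously that the classes of distinct pairs $(s,t)$ remain linearly independent and nonzero in the quotient, so that the dimension count matches. I would handle this by constructing an explicit associative algebra on the proposed basis with the obvious multiplication. The natural map from $\n_{Q'}$ onto it kills $I$. This gives a surjection from it back to $\n_{Q'}/I$, and hence an isomorphism.

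Finally, in case $(\rmn1)$, Theorem~\ref{Thm:Ricci soliton} gives an inner product on $R(P)$ that is an algebraic Ricci soliton with constant $-1$. In case $(\rmn2)$, Theorem~\ref{Thm:alg conected} gives the same conclusion. Proposition~\ref{prop:abelian direct sum} with $c=-1$ then produces the desired inner product on $R(P)\oplus\mathfrak{a}\cong\n_{Q'}/I$, which we pull back along the isomorphism.
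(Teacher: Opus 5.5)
Your proposal is correct and follows essentially the same route as the paper: split $\n_{Q'}/I$ as a Lie algebra direct sum of a copy of $R(P)$ and a central abelian ideal spanned by combinations of parallel arrows, then apply Proposition~\ref{prop:abelian direct sum} together with Theorem~\ref{Thm:Ricci soliton} or Theorem~\ref{Thm:alg conected}. The differences are cosmetic. You take the central elements to be $\alpha_1-\alpha_2$ and keep $Q_1$ itself in the $R(P)$-summand, whereas the paper uses the averaged arrow $\frac{1}{k}\sum_i a_i$ and the deviations $a_j'$; you also justify the basis of $\n_{Q'}/I$, which the paper takes for granted.
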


\begin{proof}
For each pair of vertices $u, v \in Q'_0$, if there is exactly one arrow $x \in Q_1'$ from $u$ to $v$, keep $x$ as is. If there are $k$ arrows $a_1, \dots, a_k \in Q_1'$ from $u$ to $v$ with $k \ge 2$, replace $a_1, \dots, a_k$ by the following $k$ elements of $\n_{Q'} / I$:
\[
\frac{1}{k} \sum_{i=1}^k a_i,\qquad
\frac{k-1}{k} a_j + \Bigl(-\frac{1}{k}\Bigr) \sum_{i \neq j} a_i,\quad j = 1, \dots, k-1,
\]
denoted by $a, a_1', \dots, a_{k-1}'$, respectively. When $k = 1$, the element $a$ is simply the unique arrow. It is easy to check that $[a_j', \n_{Q'} / I] = 0$ for $j = 1, \dots, k-1$, and that the vector space spanned by $a_1, \dots, a_k$ coincides with that spanned by $a, a_1', \dots, a_{k-1}'$.

The arrows in $Q_1'$ together with the equivalence classes of paths of length greater than one form a basis of $\n_{Q'} / I$, but this basis is not necessarily nice. However, applying the above replacement yields a nice basis $\beta$. Denote the set of equivalence classes of paths of length greater than one by $\beta_1$, and partition $\beta \setminus \beta_1$ into
\begin{eqnarray*}
\beta_2 &=& \{\text{elements in } \beta \setminus \beta_1 \text{ with all coefficients } \geq 0\},\\
\beta_3 &=& \{\text{elements in } \beta \setminus \beta_1 \text{ with some coefficients }   <     0\}.
\end{eqnarray*}
Elements in $\beta_2$ are of the form $a$ above, while those in $\beta_3$ are of the form $a_1', \dots, a_{k-1}'$. Let $\n_1$ be the ideal generated by $\beta_1 \cup \beta_2$, and $\n_2$ the ideal generated by $\beta_3$. Since $[a_j', \n_{Q'} / I] = 0$ for each $j$, we have that $\n_2$ is abelian and $[\n_1, \n_2] = 0$. Moreover, as vector spaces, $\n_1$ is spanned by $\beta_1 \cup \beta_2$ and $\n_2$ is spanned by $\beta_3$. Hence $\n_{Q'} / I = \n_1 \oplus \n_2$ as Lie algebras.

We now show $\n_1 \cong R(P)$. The set $\beta_1 \cup \beta_2$ is a basis of $\n_1$. Define a linear map $f: \n_1 \to R(P)$ as follows. For $x \in \beta_1$, set $f(x) = (s,t)$, where $s$ and $t$ are the starting and terminal vertices of $x$. For $x \in \beta_2$, write $x = \frac{1}{k} \sum_{i=1}^k x_i$ with $x_1,\dots,x_k \in Q_1'$ and $s(x_1) = \cdots = s(x_k)$, $t(x_1) = \cdots = t(x_k)$; set $f(x) = (s(x_1), t(x_1))$. One checks that $f$ is bijective and respects the Lie bracket, hence $f$ is an isomorphism. Thus $\n_1 \cong R(P)$ as Lie algebras. Consequently, there exists an inner product $\langle\cdot,\cdot\rangle_1$ on $\n_1$ such that $(\n_1, \langle\cdot,\cdot\rangle_1)$ is an algebraic Ricci soliton with respect to $-1$, and $\n_2$ is abelian. By Proposition~\ref{prop:abelian direct sum}, the theorem follows.
\end{proof}

\bibliographystyle{plain}
\bibliography{Riccisoliton}

\end{document}